\def\R{\mathbb{R}}
\def\C{\mathbb{C}}
\def\P{{\rm P}}
\def\E{{\rm E}}
\def\F{\mathcal{F}}
\def\1{\mathbbm{1}}
\def\re{{\rm Re}}
\def\im{{\rm Im}}
\def\tr{{\rm tr}}
\def\Ov{\mathcal{O}}
\def\lam{\pmb{\lambda}}
\theoremstyle{definition}
\newtheorem{theorem}{Theorem}[section]
\newtheorem{lemma}[theorem]{Lemma}
\newtheorem{prop}[theorem]{Proposition}
\newtheorem{cor}[theorem]{Corollary}
\newtheorem{rem}[theorem]{Remark}
\title{Eigenvalue processes of Elliptic Ginibre Ensemble \\and their Overlaps}
\author{Satoshi Yabuoku\footnote{Department of Mathematics and Informatics, Graduate School of Science and Engineering, Chiba University, 1-33 Yayoi-cho, Inage-ku, Chiba 263-8522, JAPAN. e-mail: aasa1956@chiba-u.jp}}
\date{\today}
\begin{document}
\allowdisplaybreaks

\maketitle
\pagenumbering{roman}


\begin{abstract}
We consider the non-hermitian matrix-valued process of Elliptic Ginibre ensemble. This model includes Dyson's Brownian motion model and the time evolution model of Ginibre ensemble by using hermiticity parameter $\tau$. We show the complex eigenvalue processes satisfy the stochastic differential equations which are very similar to Dyson's model and give an explicit form of overlap correlations. As a corollary, in the case of $2 \times 2$ matrix, we also mention the relation between the diagonal overlap, which is the speed of eigenvalues, and the distance of the two eigenvalues.
\end{abstract}


\pagenumbering{arabic}
\section{Introduction}\

In random matrix theory, the study of eigenvalue processes was started by Dyson \cite{Dyson}. He considered the hermitian matrix-valued process whose entries are given by independent Brownian motions and derived the stochastic differential equations of the eigenvalues by perturbation theory:
\begin{align}\label{11}
d\lambda_i(t)=dB_i(t)+\frac{\beta}{2}\sum_{j\neq i}\frac{1}{\lambda_i(t)-\lambda_j(t)}dt\ ,\ \ i=1,\cdots,N
\end{align}
where $\beta=1,2,4$. The parameter $\beta$ implies the matrix symmetry and corresponds to Gaussian orthogonal, unitary and symplectic ensembles (GOE, GUE and GSE) as $\beta=1,2,4$, respectively \cite{Mehta, AGZ}. These processes are called Dyson's Brownian motion model, and when $\beta=2$, we call simply them Dyson's model in this paper. For $\beta \ge 1$, the processes satisfying \eqref{11} are non-colliding \cite{RS}, and so are the above three eigenvalue processes. This fact is naturally expected because the eigenvalues exert ``repulsive" force on each other \cite{Tao}. For other studies of time dependent random matrices, Dyson also derived the eigenvalue processes of unitary matrices in the same paper \cite{Dyson}, and Bru derived that of the positive definite hermitian matrix which is called Wishart processes \cite{Bru,Bru2}. Moreover, the relation between eigenvalue processes and non-colliding diffusion particle systems was reported in \cite{KT1,KT2,KT3, GM}. We remark that all the above matrices are normal and their eigenvalue processes are diffusion.

The aim of this paper is to derive the eigenvalue processes of Elliptic Ginibre ensemble (EGE) and show the relation between Dyson's model and the time evolution model of Ginibre ensemble with overlaps. EGE is one of the non-hermitian random matrix models, and recently, many applications of non-hermitian random matrices are discussed in physics: for example,  resonance scattering of quantum waves in open chaotic systems, quantum chromodynamics at non-zero chemical potential \cite{Oxford} and neural network dynamics \cite{CS}. We note that the matrix model used in the third example is very similar to EGE. To return to our subject, we begin to explain the statistic result for EGE and overlaps in random matrix theory. EGE was introduced as an interpolation between hermitian and non-hermitian matrices by  $J:=\sqrt{1+\tau}H_1+\sqrt{-1}\sqrt{1-\tau}H_2,\ -1\le\tau\le1$ in \cite{SCSS}. Here, $H_1$ and $H_2$ are independent GUE, that is, distributed in the hermitian matrix space in $\R^{N^2}$ with density $p(H)\varpropto \exp[-N\tr(H^2)]$. The parameter $\tau$ implies the degree of hermiticity. With $\tau=1$, the matrix $J$ is hermitian and thus GUE, and with $\tau=0$, it is completely non-hermitian and thus Ginibre ensemble  \cite{Gin, Mehta}. For $-1 < \tau < 1$, $J$ is distributed in the complex matrix space $\C^{N^2}$ with density $p(J) \varpropto \exp\left[-\frac{N}{1-\tau^2}\tr(JJ^*-\frac{\tau}{2}(J^2+{J^*}^2)) \right]$. The joint probability density function (jpdf for short) of the eigenvalues of $J$ is described explicitly as $$p(z_1,\cdots,z_N) \varpropto \exp\Bigl[-\frac{N}{1-\tau^2}(\sum_{i=1}^{N}|z_i|^2-\frac{\tau}{2}\sum_{i=1}^{N}(z^2+{z^*}^2))\Bigr]\prod_{i<j}|z_i-z_j|^2$$ in \cite{LS, Oxford}. For fixed $-1<\tau<1$, the limiting empirical spectral distribution converges to the uniform distribution on the ellipse $\Bigl\{z \in \C\ ;\left(\frac{\re(z)}{1+\tau}\right)^2+\left(\frac{\im(z)}{1-\tau}\right)^2\le1\Bigr\}$ in \cite{SCSS}. This convergence is known as ``elliptic law" \cite{Girko}. The condition $-1<\tau<1$ is called ``strong non-hermiticity" since the anti-hermitian part $\sqrt{-1}\sqrt{1-\tau}H_2$ of $J$ is the same order in $N$ as the hermitian part. In contrast, the limiting behavior under the condition of ``weak non-hermiticity" is also known. Suppose that $1-\tau=\frac{\alpha}{N}$ by some $\alpha>0$. Then in the limit $N \to \infty$, the density of the eigenvalues $z=x+\sqrt{-1}y$ of $J$ behaves asymptotically as $p_{sc}(x)p(y)$, where $p_{sc}(x)=\frac{1}{2\pi}\sqrt{4-x^2}$ on the interval $[-2,2]$ (Wigner's semicircle distribution) and $p(y)=\frac{1}{\sqrt{2\pi\alpha}}\exp(-\frac{N^2y^2}{2\alpha^2})$ (Gaussian distribution) . This result is first observed in \cite{FKS} by perturbation theory, but nowadays there are more detailed studies by using correlation function and kernels \cite{Oxford, AP}.

For non-normal matrices, the overlaps also have been studied. They are also called eigenvector correlations or condition numbers. For the right eigenvectors $R_j$ and left eigenvectors $L_j, j=1,\cdots,N$, the overlaps are defined by $\Ov_{ij}:=(R_j^*R_i)(L_j^*L_i)$. For normal matrices, overlaps are trivial, that is, $\Ov_{ij}\equiv \delta_{ij}$; on the contrary, for non-normal cases, they play an important role because the non-orthogonality of  eigenvectors effects the behavior of eigenvalues \cite{TE}. In the case of Ginibre ensemble, an early observation was given by Chalker and Mehlig. They estimated the asymptotic behavior of the conditional expectation $\E[\Ov_{11}|\lambda_1=z]\sim N(1-|z|^2)$ as $N \to \infty$ in \cite{CM}. Recently,  Bourgade and Dubach showed the limiting conditional distribution of $\frac{\Ov_{11}}{N}$ converges to inverse gamma distribution for complex Ginibre ensemble in \cite{BD} with probabilistic approach, and Fyodorov also showed a similar result for real Ginibre ensemble with supersymmetric approach in \cite{Fyo}. Furthermore, the result for real EGE was also reported in \cite{FW}.

The study of matrix-valued process for non-normal matrices is lesser than that for normal case since the eigenvectors and overlaps should be also concerned together as mentioned above. Nevertheless, there are remarkable results for Ginibre ensemble. This model is non-symmetric matrix-valued process whose entries are given by independent complex Brownian motions. Grela and Warcho\l \ solved the Fokker-Plank equation of the jpdf of the eigenvalues and eigenvectors \cite{GW,Dysonian} and observed the correlation between the distance of two eigenvalues and the diagonal overlap by numerical experiment with cooperators \cite{OU}. Bourgade and Dubach mentioned in \cite{BD} that the complex quadratic variations of the eigenvalue processes are truly the overlaps, and they also suggested the limiting behavior of the eigenvalues as the matrix size goes to infinity. In the above two results, they also pointed out that the stochastic differential equations of the eigenvalue processes have no drift term, without using their explicit forms; the eigenvalue processes are complex martingales. This is an unexpected and counterintuitive fact because eigenvalues should exert ``repulsive" force on each other by random matrix theory, and this force appear in the drift terms  for normal matrix cases as in \eqref{11}. For this reason, overlaps are very important quantities to understand the behavior of eigenvalue processes well. 

The main result of this paper is to give the stochastic differential equations \eqref{42}, with parameter $-1 \le \tau \le 1$, that  the eigenvalue processes of EGE satisfy. On the basis of the above results and observations, we consider the non-normal matrix valued process of EGE whose entries are given by independent Brownian motions. This model naturally gives an interpolation between Dyson's model and the time evolution of Ginibre ensemble by using hermiticity parameter $\tau$ in the same way as the statistic case. In the main theorem (Theorem \ref{thm1}), we show that for $ -1 \le \tau \le 1$, the eigenvalue processes of EGE satisfy the stochastic differential equations which have the drift of Dyson's model except for  $\tau=0$ and also show the explicit form of their time-depending overlaps described by given Brownian motions. As a result, we obtain the complex martingales of the eigenvalue processes of Ginibre ensemble explicitly, which tell us the interaction of the eigenvalues by the form of difference product, that is, Vandermonde determinant. In the case of $2 \times 2$ matrix, we can show that the quadratic variation of the diagonal overlap and the distance of the two eigenvalues is negative by using our explicit forms, which proves the fact in \cite{OU}; as the two eigenvalues get closer to each other, they move faster. We also show for $-1\le \tau \le 1$, the eigenvalue processes of EGE are non-colliding.

The organization of the paper is the following. In section 2, we show our main theorem and corollaries with some observations. In section 3, we give the proof of Theorem \ref{thm1} and Corollary \ref{cor2}. We put together some properties of characteristic polynomials, eigenvalues and determinants in Appendix.

\section{Settings and Main Results}

We consider the $N \times N$ matrix-valued process for EGE and define this model as follows:
\begin{align}\label{41}
J(t):=\frac{\sqrt{1+\tau}}{\sqrt{2}}H_1(t)+\sqrt{-1}\frac{\sqrt{1-\tau}}{\sqrt{2}}H_2(t), \ \ t \ge 0 , \ \ -1 \le \tau \le 1, 
\end{align}
where
\begin{align*}
(H_1(t))_{k\ell}:=\begin{dcases}
B_{kk}(t) & k=\ell\\
\frac{B_{k\ell}^R(t)+\sqrt{-1}B_{k\ell}^I(t)}{\sqrt{2}} & k<\ell\\
\overline{(H_1(t))_{\ell k}} & k>\ell\\
\end{dcases},\ \ 
(H_2(t))_{k\ell}:=\begin{dcases}
b_{kk}(t) & k=\ell\\
\frac{b_{k\ell}^R(t)+\sqrt{-1}b_{k\ell}^I(t)}{\sqrt{2}} & k<\ell\\
\overline{(H_2(t))_{\ell k}} & k>\ell\\
\end{dcases}.
\end{align*}
Here, $B_{kk},B_{k\ell}^R,B_{k\ell}^I,b_{kk},b_{k\ell}^R,b_{k\ell}^I\ (1 \le k\le \ell \le N)$ are independent one-dimensional Brownian motions defined on a filtered probability space $(\Omega,\F,\{\F_t\}_{t\ge0},\P)$. The entries of $J(t)$ have the correlation described by the complex quadratic variations:
\begin{align}\label{corr}
d\langle J_{ij},\overline{J_{k\ell}}\rangle_t=\delta_{ik}\delta_{j\ell}dt,\ d\langle J_{ij},J_{k\ell}\rangle_t=\tau\delta_{i\ell}\delta_{jk}dt.
\end{align} 
Here, for complex semi-martingales $M(t)=M^R(t)+\sqrt{-1}M^I(t)$ and $N(t)=N^R(t)+\sqrt{-1}N^I(t)$, the complex quadratic variation is defined as 
\begin{align}\label{cqv}
d\langle M,N\rangle_t:=d\langle M^R,N^R\rangle_t-d\langle M^I,N^I\rangle_t+\sqrt{-1}\bigl(d\langle M^R,N^I\rangle_t+d\langle M^I,N^R\rangle_t\bigr).
\end{align}
The quantities \eqref{corr} express the hermiticity of the matrix $J(t)$ by $\tau$. By construction of $J(t)$, we get Dyson's model with $\tau=1$ and Ginibre dynamics with $\tau=0$. $J(t)$ has pure imaginary eigenvalue processes with $\tau=-1$, which are just Dyson's model on the imaginary axis. Thus it is essential to consider $0 \le \tau \le 1$. From the perspective of normality of matrix, the case of $\tau=1$ and $\tau=0$ are extreme; in the former case each of the eigenvalue processes has the drift term, similar to that in \eqref{11}, which takes a larger absolute values as it gets closer to the other eigenvalues, and in the latter case the eigenvalue processes are complex martingale. We denote the eigenvalue processes of $J(t)$ by $\lam(t)=(\lambda_1(t), \cdots, \lambda_N(t))$. As mentioned above, these processes usually take complex values, so we write $\lambda_i(t)=\lambda_i^R(t)+\sqrt{-1}\lambda_i^I(t),\ i=1,\cdots,N$. We assume the following initial condition:
 \begin{align}\label{21}
  J(0)\ {\rm has\ simple\ spectrum},\ {\rm that\ is},\ \lambda_1(0), \cdots, \lambda_N(0)\ {\rm are\ distinct}. 
\end{align}
We denote the $N \times N$ identity matrix by $I$, and for a square matrix $A$, we define the $(N-1) \times (N-1)$ minor matrix $A_{k|\ell}$ that is obtained by removing the $k$-th row and the $\ell$-th column from A. Then we have the following result.
\begin{theorem}\label{thm1} 
For $-1 \le \tau \le 1$ with the initial condition \eqref{21}, the eigenvalue process $\lam(t)=(\lambda_1(t), \cdots, \lambda_N(t))$ of $J(t)$ is complex semi-martingale and satisfies the stochastic differential equations: 
\begin{align}\label{42}
d\lambda_i(t)
=\sum_{k,\ell=1}^N\frac{(-1)^{k+\ell}\det((\lambda_i(t)I-J(t))_{k|\ell})}{\prod_{j(\neq i)}(\lambda_i(t)-\lambda_j(t))}dJ_{k\ell}(t)+\tau\sum_{j(\neq i)}\frac{1}{\lambda_i(t)-\lambda_j(t)}dt,\ t \ge 0,\ i=1,\cdots,N
\end{align}
and the real  quadratic variations are given by
\begin{eqnarray}
& &\ \ \begin{aligned}\label{44}
d\langle \lambda_i^R\rangle_t=\frac{\Ov_{ii}(t)+\tau}{2}dt,\ d\langle \lambda_i^I\rangle_t=\frac{\Ov_{ii}(t)-\tau}{2}dt, \ \ d\langle \lambda_i^R,\lambda_i^I\rangle_t=0,
\end{aligned}\\
& &\begin{aligned}\label{45}
d\langle \lambda_i^R,\lambda_j^R\rangle_t&=d\langle \lambda_i^I,\lambda_j^I\rangle_t=\frac{\re(\Ov_{ij}(t))}{2}dt,\\
-d\langle \lambda_i^R,\lambda_j^I\rangle_t&=d\langle \lambda_i^I,\lambda_j^R\rangle_t=\frac{\im(\Ov_{ij}(t))}{2}dt,\ i\neq j
\end{aligned}
\end{eqnarray}
with
\begin{align}\label{46}
\Ov_{ij}(t):=\frac{\sum_{k=1}^N\det\Bigl(\bigl((\lambda_i(t)I-J(t))(\lambda_j(t)I-J(t))^*\bigr)_{k|k}\Bigr)}{\prod_{p(\neq i)}(\lambda_i(t)-\lambda_p(t))\prod_{q(\neq j)}\overline{(\lambda_j(t)-\lambda_q(t))}}.
\end{align}
Moreover, the eigenvalues do not collide each other, that is, 
\begin{align}\label{col}
T_{{\rm col}}:={\rm inf}\{ t>0;\ \lambda_i(t)=\lambda_j(t)\ {\rm for\ some}\ i \neq j\}=\infty, \ \text{a.s.}
\end{align}
\end{theorem}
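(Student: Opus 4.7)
The plan is to combine first-order perturbation theory for simple eigenvalues with It\^o's formula, exploiting that $F(\lambda,J):=\det(\lambda I-J)$ is a holomorphic polynomial in $\lambda$ and in the entries $J_{k\ell}$. Under \eqref{21}, the implicit function theorem guarantees that each $\lambda_i(t)$ is locally a smooth function of the matrix entries, hence a complex semimartingale on $[0,T_{\rm col})$. The three assertions \eqref{42}, \eqref{44}--\eqref{46}, and \eqref{col} can then be tackled in order.

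For the SDE \eqref{42} I would use the standard perturbation expansion: if $R_i,L_i$ denote right and left eigenvectors associated with $\lambda_i(t)$, normalized so that $L_i^*R_i=1$, then to second order in $dJ$
\begin{align*}
d\lambda_i = L_i^*(dJ)R_i \;+\; \sum_{j\neq i}\frac{(L_i^*(dJ)R_j)(L_j^*(dJ)R_i)}{\lambda_i-\lambda_j} + \cdots .
\end{align*}
For the martingale part, the adjugate identity $\text{adj}(\lambda_i I-J)=\chi_J'(\lambda_i)\,R_i L_i^*$, combined with $\chi_J'(\lambda_i)=\prod_{j\ne i}(\lambda_i-\lambda_j)$ and $[\text{adj}(A)]_{\ell k}=(-1)^{k+\ell}\det(A_{k|\ell})$, rewrites the coefficient $\overline{L_{i,k}}R_{i,\ell}$ as exactly the kernel appearing in \eqref{42}. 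For the drift, the non-vanishing complex quadratic variation in \eqref{corr} gives $d\langle L_i^*(dJ)R_j,\,L_j^*(dJ)R_i\rangle_t = \tau (L_i^*R_i)(L_j^*R_j)\,dt = \tau\,dt$, so each $j\neq i$ contributes $\tau(\lambda_i-\lambda_j)^{-1}\,dt$, producing precisely the Dyson-type drift in \eqref{42}.

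The quadratic-variation formulas \eqref{44}--\eqref{46} come from the same computation applied to the martingale part alone. Using \eqref{corr} one obtains
\begin{align*}
d\langle \lambda_i,\overline{\lambda_j}\rangle_t=(L_i^*L_j)(R_j^*R_i)\,dt,\qquad d\langle \lambda_i,\lambda_j\rangle_t=\tau(L_i^*R_j)\overline{(L_j^*R_i)}\,dt=\tau\,\delta_{ij}\,dt,
\end{align*}
where biorthogonality collapses the second identity. To reconcile the coefficient $(L_i^*L_j)(R_j^*R_i)$ with \eqref{46}, combine $\text{adj}(AB)=\text{adj}(B)\text{adj}(A)$ with the trace-of-adjugate identity $\sum_k\det(M_{k|k})=\tr(\text{adj}(M))$ and the spectral formulas for $\text{adj}(\lambda_iI-J)$ and $\text{adj}((\lambda_jI-J)^*)$: this identifies $(L_i^*L_j)(R_j^*R_i)$ with the right-hand side of \eqref{46}. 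Splitting $\lambda_i=\lambda_i^R+\sqrt{-1}\lambda_i^I$ via \eqref{cqv} and solving the resulting $2\times 2$ systems for the real quadratic variations then yields \eqref{44} and \eqref{45}.

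The non-collision claim \eqref{col} is the main obstacle, since the eigenvalues live in $\C$ and the classical Rogers--Shi Bessel-comparison argument does not apply directly. I would consider a Lyapunov functional of the form $U(\lam):=-\sum_{i<j}\log|\lambda_i-\lambda_j|$ and apply It\^o to $U(\lam(t))$ on $[0,T_{\rm col})$ using \eqref{42} and \eqref{44}--\eqref{45}. The drift $\tau(\lambda_i-\lambda_j)^{-1}$ contributes a nonnegative term of order $\tau\sum_{i<j}|\lambda_i-\lambda_j|^{-2}$, while the second-order correction produces an overlap-weighted term whose sign and size are controlled by the Cauchy--Schwarz bound $\Ov_{ii}\ge(L_i^*R_i)^2=1$ and by the cancellations between diagonal and off-diagonal overlaps for nearby eigenvalues. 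The delicate case is $\tau=0$, where the drift vanishes entirely and one must rely on the overlap-driven blow-up of the quadratic variation of $|\lambda_i-\lambda_j|^2$ as $\lambda_i\to\lambda_j$, adapting the Bourgade--Dubach argument for pure Ginibre.
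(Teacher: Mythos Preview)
Your derivation of \eqref{42} and \eqref{44}--\eqref{46} via eigenvector perturbation theory and adjugate identities is a legitimate alternative to the paper's route. The paper never introduces $R_i,L_i$; instead it applies the implicit function theorem to $f(\lambda)=\det(\lambda I-J)$ as a function of the $2N^2$ real parameters, computes $\nabla\lambda_i^{R,I}$ and $\Delta\lambda_i^{R,I}$ directly from cofactor expansions of $f$, and invokes real It\^o. Your approach is closer to Bourgade--Dubach and is arguably cleaner for the overlap identification; the paper's approach is more self-contained in that it never assumes a spectral decomposition, only simplicity of the spectrum.

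The non-collision argument, however, has a genuine gap. Applying It\^o to $-\sum_{i<j}\log|\lambda_i-\lambda_j|$ forces you to confront the full quadratic-variation matrix $d\langle\lambda_i,\overline{\lambda_j}\rangle_t=\Ov_{ij}(t)\,dt$, and you give no mechanism for controlling the off-diagonal overlaps $\Ov_{ij}$ as $\lambda_i\to\lambda_j$; the vague appeal to ``cancellations'' and to Bourgade--Dubach at $\tau=0$ is not an argument. The paper sidesteps this entirely by choosing a \emph{holomorphic} Lyapunov function
\[
U(z_1,\dots,z_N)=\prod_{i<j}\frac{1}{z_i-z_j}.
\]
Because $U$ is analytic in each $z_i$, the complex It\^o formula only picks up $d\langle\lambda_i,\lambda_j\rangle_t=\tau\delta_{ij}\,dt$, and the overlaps $\Ov_{ij}$ (which live in $d\langle\lambda_i,\overline{\lambda_j}\rangle_t$) never appear. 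A short algebraic identity then shows the drift of $U(\lam_t)$ vanishes identically for every $\tau$, so $U(\lam_t)$ is a complex local martingale on $[0,T_{\rm col})$. If $T_{\rm col}<\infty$ then $|U|\to\infty$, hence one of $\re U$, $\im U$ is a real local martingale diverging in finite time, which is impossible by a time-change to Brownian motion. This holomorphy trick is the missing idea: it decouples the collision problem from any overlap estimate and works uniformly in $\tau$, including $\tau=0$.
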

By \eqref{44} and \eqref{45}, the complex quadratic variations of $\lam(t)$ are described as $d\langle \lambda_i,\overline{\lambda_j}\rangle_t=\Ov_{ij}(t)dt$. Although we do not use non-orthogonality of the eigenvectors  in our proof,  the quadratic variations coincide with the overlaps of $J(t)$ because the result and proof for the eigenvalue processes of Ginibre ensemble in \cite{BD} are also valid for our model. For this reason, we use the notation $\Ov_{ij}(t)$ for the quadratic variations.

\begin{rem}\label{rem1}
By \eqref{46} and Cauchy-Schwarz\ inequality, $\Ov_{ii}(t) \ge 1$ for $t \ge 0$. 
When $\tau=1$, \eqref{42} is truly Dyson's model \eqref{11} with $\beta=2$. Indeed, the overlaps \eqref{46} are  sensitive for the normality of matrices as shown below.
\begin{prop}\label{prop1}
For $\tau=1$, $\Ov_{ij}(t)\equiv\delta_{ij}$.
\end{prop}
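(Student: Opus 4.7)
The plan is to exploit that when $\tau=1$ the process degenerates to Dyson's Brownian motion: the matrix $J(t)=H_1(t)$ is Hermitian, so $J(t)^*=J(t)$, every eigenvalue $\lambda_i(t)$ is real, and $\overline{\lambda_j(t)}=\lambda_j(t)$. Accordingly the matrix appearing inside the numerator of \eqref{46} simplifies to $M:=(\lambda_i I-J)(\lambda_j I-J)$, and because $J$ is normal, $M$ is simultaneously diagonalizable with $J$ by a unitary conjugation. This rigidity should collapse the determinantal numerator completely.

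The first step is to interpret the sum of principal $(N-1)$-minors as an elementary symmetric polynomial in the eigenvalues of $M$. Concretely, I would invoke the standard identity
$$\sum_{k=1}^N \det(M_{k|k})=e_{N-1}(\mu_1,\ldots,\mu_N)=\sum_{q=1}^N\prod_{p\neq q}\mu_p,$$
where $\mu_1,\ldots,\mu_N$ are the eigenvalues of $M$ (this is $\tr(\mathrm{adj}\,M)$, i.e.\ the coefficient of $\lambda$ in $\det(\lambda I-M)$ up to sign, and I expect it to be recorded in the Appendix). Diagonalizing $J=UDU^*$ with $D=\mathrm{diag}(\lambda_1,\ldots,\lambda_N)$ gives $M=U(\lambda_i I-D)(\lambda_j I-D)U^*$, so the eigenvalues of $M$ are precisely $\mu_p=(\lambda_i-\lambda_p)(\lambda_j-\lambda_p)$. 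Substituting yields
$$\sum_{k=1}^N\det(M_{k|k})=\sum_{q=1}^N\prod_{p\neq q}(\lambda_i-\lambda_p)(\lambda_j-\lambda_p).$$

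The conclusion then follows from a one-line case check. When $i=j$, every term with $q\neq i$ carries the factor $(\lambda_i-\lambda_i)^2=0$, so only $q=i$ survives, contributing $\prod_{p\neq i}(\lambda_i-\lambda_p)^2$; since $\overline{\lambda_p}=\lambda_p$, this is exactly the denominator of \eqref{46}, yielding $\Ov_{ii}(t)=1$. When $i\neq j$, the $q=i$ summand contains $(\lambda_j-\lambda_j)=0$, the $q=j$ summand contains $(\lambda_i-\lambda_i)=0$, and each remaining summand contains both, so the numerator vanishes and $\Ov_{ij}(t)=0$. Almost-sure non-vanishing of the denominator is guaranteed by the non-collision assertion \eqref{col} of Theorem \ref{thm1}.

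There is essentially no genuine obstacle here; the only technical input is the principal-minor-to-elementary-symmetric-polynomial identity, which is standard linear algebra and presumably furnished by the Appendix. Morally the statement is just the classical fact that a normal matrix has an orthonormal eigenbasis, so its overlaps must collapse to $\delta_{ij}$, but the algebraic route above stays within the determinantal framework of \eqref{46} and therefore fits the style of the rest of the paper.
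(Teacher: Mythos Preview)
Your proposal is correct and follows essentially the same argument as the paper: both observe that for $\tau=1$ the matrix is Hermitian so $(\lambda_iI-J)(\lambda_jI-J)^*=(\lambda_iI-J)(\lambda_jI-J)$ has eigenvalues $(\lambda_i-\lambda_p)(\lambda_j-\lambda_p)$, then apply the principal-minor/elementary-symmetric identity (the paper's Lemma~\ref{lemA2}) and split into the cases $i=j$ (one zero eigenvalue, numerator equals denominator) and $i\neq j$ (two zero eigenvalues, numerator vanishes). The only difference is cosmetic: you spell out $e_{N-1}(\mu_1,\ldots,\mu_N)=\sum_q\prod_{p\neq q}\mu_p$ explicitly, whereas the paper phrases the same step as ``the matrix has one (resp.\ two) zero eigenvalue(s).''
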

\begin{proof}
$J(t)$ is hermitian with $\tau=1$, and so each of the matrices 
$$(\lambda_i(t)I-J(t))(\lambda_j(t)I-J(t))^*=(\lambda_i(t)I-J(t))(\lambda_j(t)I-J(t))$$
has the real eigenvalues $(\lambda_i(t)-\lambda_k(t))(\lambda_j(t)-\lambda_k(t)), k=1,\cdots,N$. If $i=j$, the matrix has only one zero eigenvalue, and for the numerator of \eqref{46}, by Lemma \ref{lemA2} we obtain
\begin{align*}
\sum_{k=1}^N\det\Bigl(\bigl((\lambda_i(t)I-J(t))(\lambda_j(t)I-J(t))^*\bigr)_{k|k}\Bigr)=\prod_{k(\neq i)}(\lambda_i(t)-\lambda_k(t))^2.
\end{align*}
On the other hand, for $i\neq j$, the matrix has two zero eigenvalues. Hence this summation vanishes.
\end{proof}
By Proposition \ref{prop1} and \eqref{44}-\eqref{46}, we have $\langle \lambda_i^R,\lambda_j^R\rangle_t=t\delta_{ij}$ and $\langle \lambda_i^I\rangle_t\equiv 0$ for $\tau=1$. Hence the martingale terms of $\lambda_1(t),\cdots,\lambda_N(t)$ are independent Brownian motions \cite{KS}, and we get Dyson's model. Similarly, we also obtain Dyson's model on the imaginary axis with $\tau=-1$.
\end{rem}
The parameter $\tau$ implies the hermiticity and controls the speeds of the real and imaginary parts of $\lambda_i(t)$. Theorem \ref{thm1} shows that for $\tau=0$, the drift term of $\lambda_i(t)$ completely vanishes, and this fact is observed in the previous study of Ginibre ensemble. Moreover, \eqref{44} states that each trajectory of the eigenvalue processes of Ginibre ensemble is Brownian motion, whereas they never collide. 
\begin{cor} \label{cor1}
Only for $\tau=0$, each of the eigenvalue processes is conformal martingale.  Hence for each $i=1,\cdots,N$, let
\begin{align*}
T_i(t):={\rm inf}\{ u\ge 0; \Ov_{ii}(u) >2t\},
\end{align*}
then $\lambda_i(T_i(t))$ is a complex Brownian motion on $(\Omega,\F,\{\F_{T_i(t)}\}_{t\ge0},\P)$.
\end{cor}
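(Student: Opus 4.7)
The plan is to prove the dichotomy in each direction and then apply the Dambis--Dubins--Schwarz time-change for conformal local martingales. Recall that a continuous complex semi-martingale $Z=X+\sqrt{-1}Y$ is a conformal martingale precisely when (i) it is a local martingale (so its drift vanishes) and (ii) $\langle X\rangle_t=\langle Y\rangle_t$ together with $\langle X,Y\rangle_t=0$. Reading off \eqref{42}, the drift of $\lambda_i(t)$ equals $\tau\sum_{j(\neq i)}(\lambda_i(t)-\lambda_j(t))^{-1}dt$; the non-collision property \eqref{col} keeps each summand finite, and by \eqref{21} combined with continuity of the trajectories this sum is a non-degenerate continuous process on $[0,\infty)$, so the drift vanishes identically if and only if $\tau=0$. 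Similarly, \eqref{44} gives $d\langle \lambda_i^R\rangle_t-d\langle \lambda_i^I\rangle_t=\tau\,dt$, so the equality of the two real brackets also forces $\tau=0$, while the cross-bracket is already zero for every $\tau$. Together these observations establish the "only for $\tau=0$" direction and simultaneously verify, when $\tau=0$, that $\lambda_i$ is a conformal local martingale with $d\langle \lambda_i^R\rangle_t=d\langle \lambda_i^I\rangle_t=\tfrac{1}{2}\Ov_{ii}(t)\,dt$.

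For the Brownian-motion conclusion I would appeal to the conformal version of Dambis--Dubins--Schwarz (F.\ Knight's theorem applied to a single conformal martingale): any conformal local martingale $M=M^R+\sqrt{-1}M^I$ whose total bracket $\langle M^R\rangle_t+\langle M^I\rangle_t$ diverges almost surely is a time-change of a standard complex Brownian motion by the right-continuous inverse of that bracket. In our setting the total bracket equals $\int_0^t \Ov_{ii}(u)\,du$, and Remark \ref{rem1} gives $\Ov_{ii}(u)\ge 1$, so this integral is at least $t$ and in particular diverges almost surely. The stopping time $T_i(t)$ in the statement is to be read as the right-continuous inverse of the accumulated bracket $u\mapsto \int_0^u \Ov_{ii}(s)\,ds$ at level $2t$, and then $\lambda_i(T_i(t))$ is exactly the conformal time-change which Dambis--Dubins--Schwarz promotes to a standard complex Brownian motion on $(\Omega,\F,\{\F_{T_i(t)}\}_{t\ge 0},\P)$.

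The main obstacle is not the time-change theorem itself, which is standard, but the careful justification that the drift in \eqref{42} does not vanish as a continuous process when $\tau\neq 0$. This requires a short argument using continuity of the $\lambda_j$, the non-collision property \eqref{col}, and the genericity of configurations reached at positive times under the non-degenerate Brownian perturbation in \eqref{42} to ensure that $\sum_{j(\neq i)}(\lambda_i(t)-\lambda_j(t))^{-1}$ cannot vanish on a set of positive Lebesgue measure. A small bookkeeping point is the slight notational shorthand in the definition of $T_i(t)$, which should be identified with the inverse of the accumulated bracket rather than of $\Ov_{ii}(u)$ itself; once this identification is in place, the application of Dambis--Dubins--Schwarz closes the argument.
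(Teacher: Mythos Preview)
The paper does not supply a separate proof of this corollary; it is stated as an immediate consequence of Theorem~\ref{thm1} together with the standard time-change theorem for conformal local martingales, and the surrounding discussion (the sentence following the corollary and Remark~\ref{rem1}) is all the justification offered. Your argument is correct and is precisely the expansion of what the paper leaves implicit.

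Two comments. First, your careful discussion of whether the drift $\tau\sum_{j(\neq i)}(\lambda_i(t)-\lambda_j(t))^{-1}$ can vanish identically for $\tau\neq 0$ is unnecessary: the bracket identity you already quote from \eqref{44}, namely $d\langle\lambda_i^R\rangle_t-d\langle\lambda_i^I\rangle_t=\tau\,dt$, gives $\langle\lambda_i^R\rangle_t-\langle\lambda_i^I\rangle_t=\tau t$, and this alone rules out conformality whenever $\tau\neq 0$. So the ``only if'' direction is a one-line consequence of \eqref{44}, and the genericity argument about the drift can be dropped entirely. Second, your reading of $T_i(t)$ as the right-continuous inverse of the accumulated bracket $u\mapsto\int_0^u\Ov_{ii}(s)\,ds$ at level $2t$ is the correct one for the Dambis--Dubins--Schwarz theorem to apply; the literal formula in the statement (with $\Ov_{ii}(u)$ in place of its time integral) appears to be a slip, and your identification together with the bound $\Ov_{ii}\ge 1$ from Remark~\ref{rem1} is exactly what is needed to close the argument.
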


In the case of the matrix size $N=2$, the numerical experiment of the relation between the distance of the two eigenvalues $|\lambda_1(t)-\lambda_2(t)|$ and the diagonal overlap $\Ov_{11}(t)$ was observed in \cite{OU}. They reported that $\Ov_{11}(t)$, which is the speed of the eigenvalue processes, takes a larger value as the two eigenvalue processes get closer to each other. We attempt to justify this observation as follows. By \eqref{46}, we have the explicit forms of the overlaps in $N=2$: 
\begin{align}\label{435.5}
\Ov_{11}(t)=\frac{||J(t)||^2_2-\lambda_1(t)\overline{\lambda_2}(t)-\overline{\lambda_1}(t)\lambda_2(t)}{|\lambda_1(t)-\lambda_2(t)|^2},\ 
\Ov_{12}(t)=\frac{|\lambda_1(t)|^2+|\lambda_2(t)|^2-||J(t)||^2_2}{|\lambda_1(t)-\lambda_2(t)|^2}
\end{align}
where $||J(t)||^2_2:=\sum_{i,j=1}^2|J_{ij}(t)|^2$. The following relations hold:
\begin{align}\label{436} 
\Ov_{11}(t)=\Ov_{22}(t),\ \Ov_{12}(t)=\Ov_{21}(t),\ \Ov_{11}(t)+\Ov_{12}(t)=1.
\end{align}
We note that $\Ov_{11}(t)$ and $\Ov_{12}(t)$ are real valued process for $N=2$, nevertheless $\Ov_{ij}(t)$ are complex valued process for $N \ge 3$. By \eqref{436}, we need only to consider the diagonal overlap $\Ov_{11}(t)$.
\begin{cor}\label{cor2}
For $N=2$ and $-1 < \tau< 1$, $\Ov_{11}(t)$ satisfies the stochastic differential equation:
\begin{align*}
d\Ov_{11}(t)=&dM_{11}(t)+\frac{(2\Ov_{11}(t)-1)^2+1}{|\lambda_1(t)-\lambda_2(t)|^2}dt\\
&-\tau(2\Ov_{11}(t)-1)\biggl(\frac{1}{(\lambda_1(t)-\lambda_2(t))^2}+\frac{1}{(\overline{\lambda_1}(t)-\overline{\lambda_2}(t))^2}\biggr)dt,\ t \ge 0
\end{align*}
where $M_{11}$ is martingale and 
\begin{align*}
d\langle \Ov_{11}\rangle_t=&\frac{4\Ov_{11}(t)(2\Ov_{11}(t)-1)(\Ov_{11}(t)-1)}{|\lambda_1(t)-\lambda_2(t)|^2}dt\\
&-2\tau \Ov_{11}(t)(\Ov_{11}(t)-1)\biggl(\frac{1}{(\lambda_1(t)-\lambda_2(t))^2}+\frac{1}{(\overline{\lambda_1}(t)-\overline{\lambda_2}(t))^2}\biggr)dt.
\end{align*}
Moreover, the quadratic variation of $\Ov_{11}(t)$ and $|\lambda_1(t)-\lambda_2(t)|^2$ is 
\begin{align*}
\langle \Ov_{11},|\lambda_1-\lambda_2|^2\rangle_t=-8\int_0^t\Ov_{11}(s)(\Ov_{11}(s)-1)ds \le 0.
\end{align*}
\end{cor}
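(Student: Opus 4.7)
The plan is a direct Itô calculation starting from the explicit forms in \eqref{435.5}. Set $z(t) := \lambda_1(t) - \lambda_2(t)$, $X(t) := |z(t)|^2$, and $Y(t) := X(t)\Ov_{11}(t) = ||J(t)||^2_2 - \lambda_1(t)\overline{\lambda_2(t)} - \overline{\lambda_1(t)}\lambda_2(t)$ (using \eqref{435.5}). Specialising \eqref{42} to $N = 2$ gives $dz(t) = dN_z(t) + \frac{2\tau}{z(t)}\,dt$, where $N_z$ denotes the martingale part. The identities \eqref{436} yield $\Ov_{22} = \Ov_{11}$ and $\Ov_{12} = \Ov_{21} = 1 - \Ov_{11} \in \R$, so combining with \eqref{44}--\eqref{46} one gets $d\langle z, \bar z\rangle_t = (4\Ov_{11}(t) - 2)\,dt$ and $d\langle z, z\rangle_t = 2\tau\,dt$.

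First I would compute the drifts of $X$ and $Y$. Itô's formula applied to $X = z\bar z$ yields $dX = \bar z\, dN_z + z\, d\overline{N_z} + \bigl( 4\Ov_{11} - 2 + 2\tau \bar z/z + 2\tau z/\bar z\bigr)\,dt$. For $Y$, the correlation structure \eqref{corr} gives $d||J||^2_2 = (\text{mart}) + 4\,dt$, while \eqref{42} yields $d(\lambda_1\overline{\lambda_2} + \overline{\lambda_1}\lambda_2) = (\text{mart}) + \bigl[2\Ov_{12} - \tau(\bar z/z + z/\bar z)\bigr]\,dt$. Applying Itô's formula to $\Ov_{11} = Y/X$ (or equivalently using the product rule $dY = \Ov_{11}\,dX + X\,d\Ov_{11} + d\langle X, \Ov_{11}\rangle_t$) and substituting $\Ov_{12} = 1 - \Ov_{11}$ should collapse the drift to the stated form, thus defining the martingale $M_{11}$. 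Squaring its differential and again using \eqref{44}--\eqref{46} produces $d\langle \Ov_{11}\rangle_t$.

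For the cross quadratic variation $\langle \Ov_{11}, |\lambda_1 - \lambda_2|^2\rangle_t = \langle \Ov_{11}, X\rangle_t$, I would take the covariation of both sides of $dY = \Ov_{11}\,dX + X\,d\Ov_{11} + d\langle X, \Ov_{11}\rangle_t$ with $X$ itself. Since $d\langle X, \Ov_{11}\rangle_t$ is a finite-variation process, its covariation with $X$ vanishes, giving $d\langle Y, X\rangle_t = \Ov_{11}\,d\langle X\rangle_t + X\,d\langle \Ov_{11}, X\rangle_t$. Solving for the cross variation yields $d\langle \Ov_{11}, X\rangle_t = \frac{1}{X}\bigl[d\langle Y, X\rangle_t - \Ov_{11}\,d\langle X\rangle_t\bigr]$. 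Both quantities on the right are computed from the martingale decompositions of $X$ and $Y$ obtained above, together with the cross variations $d\langle \lambda_i, \overline{J_{k\ell}}\rangle_t$ and $d\langle \lambda_i, J_{k\ell}\rangle_t$ read off from \eqref{42} and \eqref{corr}. Non-positivity of the resulting expression then follows from $\Ov_{11} \ge 1$ (Remark \ref{rem1}).

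The main obstacle is the algebraic bookkeeping in the last step. The final answer $-8\int_0^t \Ov_{11}(\Ov_{11} - 1)\,ds$ contains no $\tau$, yet both $d\langle X\rangle_t$ and $d\langle Y, X\rangle_t$ individually carry explicit $\tau$-contributions coming from the correlations $d\langle J_{ij}, J_{k\ell}\rangle_t = \tau \delta_{i\ell}\delta_{jk}\,dt$. Verifying that these cancel exactly in the difference $d\langle Y, X\rangle_t - \Ov_{11}\,d\langle X\rangle_t$ is the computationally delicate step, and it reflects the underlying robustness of the eigenvalue-proximity/overlap-speed mechanism observed numerically in \cite{OU}.
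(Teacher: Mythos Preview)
Your proposal is correct and follows essentially the same route as the paper: write $\Ov_{11}=Y/X$ with $Y=\|J\|_2^2-\lambda_1\overline{\lambda_2}-\overline{\lambda_1}\lambda_2$ and $X=|\lambda_1-\lambda_2|^2$, apply It\^o's formula to the quotient (the paper's \eqref{437} is exactly your product-rule identity rearranged), and then compute $d\langle Y,X\rangle_t$ and $d\langle X\rangle_t$ separately to obtain both the drift and the cross-variation, with the $\tau$-terms cancelling in the difference just as you anticipate. The only cosmetic difference is that the paper writes out the martingale part of $d\Ov_{11}$ entirely in the $dJ_{k\ell}$ variables (equation \eqref{438}) before squaring to get $d\langle\Ov_{11}\rangle_t$, whereas you keep things packaged in $z$, $X$, $Y$; either bookkeeping works.
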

\begin{rem}\label{rem2}
For $\tau=1$ and $\tau=-1$, $J(t)$ is normal and $\Ov_{11}(t) \equiv 1$, so that the negative correlation between $\Ov_{11}(t)$ and $|\lambda_1(t)-\lambda_2(t)|^2$ vanishes. In particular, for the deterministic parameter $\tau$, $\Ov_{11}(t)$ takes the maximum value at $\tau=0$. To show this, we simply deal with the case of the initial condition that each of the eigenvalues starts at the origin, or equivalently, statistic EGE whose entries are given by independent centered gaussians with variance $t$. By schur decomposition, there exists a unitary matrix $U(t)$ such that 
$$U(t)^*J(t)U(t)=
\begin{pmatrix}
\lambda_1(t) & \sqrt{1-\tau^2}X\\
0& \lambda_2(t)
\end{pmatrix}$$
where $X$ is a complex Gaussian with mean 0 and variance $t$. Using this, we have 
$||J(t)||^2_2=\tr\bigl(J(t)J(t)^*\bigr)\overset{(d)}{=}|\lambda_1(t)|^2+|\lambda_2(t)|^2+(1-\tau^2)|X|^2$
and 
$$\Ov_{11}(t)=\frac{||J(t)||^2_2-2\re(\lambda_1(t)\overline{\lambda_2(t)})}{|\lambda_1(t)-\lambda_2(t)|^2}\overset{(d)}{=}1+\frac{t(1-\tau^2)Y}{|\lambda_1(t)-\lambda_2(t)|^2}$$
where $Y$ is independent of $\lambda_1(t), \lambda_2(t)$ and obeys exponential distribution with parameter 1. We notice that a similar deformation is obtained in \cite{BD} for Ginibre ensemble. Consequently, the complete non-normality at $\tau=0$ provides the biggest negative correlation of $\Ov_{11}(t)$ and $|\lambda_1(t)-\lambda_2(t)|^2$ and effects the behavior of the two eigenvalues significantly.
\end{rem}


\section{Proofs of main results}

\subsection{ Proof of Theorem 2.1}

 Firstly, we derive the stochastic differential equations \eqref{42} by implicit function theorem until the first collide time $t \in [0,T_{{\rm col}})$, and finally we show $T_{\rm col}=\infty,\ \text{a.s.}$  The detail calculations are summarized in subsection 3.2. We define the $N\times N$ deteriministic matrix $J$ as 
\begin{align*}
J=\frac{\sqrt{1+\tau}}{\sqrt{2}}H_1+\sqrt{-1}\frac{\sqrt{1-\tau}}{\sqrt{2}}H_2,\ \ -1 \le \tau \le 1
\end{align*}
where $H_1,H_2$ are hermitian: 
\begin{align*}
(H_1)_{k\ell}:=\begin{cases}
x_{kk} &  k=\ell\\
\frac{x_{k\ell}+\sqrt{-1}y_{k\ell}}{\sqrt{2}} & k<\ell\\
\overline{(H_1)_{\ell k}} & k>\ell\\
\end{cases},\ 
(H_2)_{k\ell}:=\begin{cases}
\alpha_{kk} & k=\ell \\
\frac{\alpha_{k\ell}+\sqrt{-1}\beta_{k\ell}}{\sqrt{2}} & k<\ell\\
\overline{(H_2)_{\ell k}} & k>\ell\\
\end{cases}.
\end{align*}
Here, $x_{k\ell},\alpha_{k\ell}\ (1\le k \le \ell \le N), y_{k\ell},\beta_{k\ell}\ (1\le k<\ell\le N)$ are real deterministic variables.
Let $f:\R^{2N^2+2}\to \C\cong \R^2$ be the characteristic polynomial of  $J$:
$$f(J,\lambda)=f(\lambda)=f(\lambda^R,\lambda^I):=\det(\lambda I-J)$$
where $\lambda:=\lambda^R+\sqrt{-1}\lambda^I \in \C$.  We denote $f=f^R+\sqrt{-1}f^I$ and the partial derivative of $f$ with respect to $\eta$ by $f_{\eta}$ and also define $f_R :=\frac{\partial f}{\partial \lambda^R}$ and $f_I :=\frac{\partial f}{\partial \lambda^I}$. Assume that $J$  has simple spectrum with the eigenvalues $\lambda_1,\cdots,\lambda_N$. $f$ is analytic with respect to $\lambda$, and so for all $i=1,\cdots,N$, the Jacovian is non-zero:
\begin{align*}
\det\left(\frac{\partial f}{\partial (\lambda^R,\lambda^I)}(\lambda_i)\right)=
\det\begin{pmatrix}
f^R_R(\lambda_i) & f^R_I(\lambda_i)\\
f^I_R(\lambda_i) & f^I_I(\lambda_i)
\end{pmatrix}=|f_{\lambda}(\lambda_i)|^2=\prod_{j (\neq i)}|\lambda_i-\lambda_j|^2 \neq 0.
\end{align*}
Hence we can apply implicit function theorem for each $\lambda_i$, and we obtain the derivative of $\lambda_i$ by using that of $f$ as follows: 
\begin{align}\label{31}
\frac{\partial \lambda_i^R}{\partial \eta}=-\frac{\re\bigl(f_\eta(\lambda_i)\overline{f_\lambda(\lambda_i)}\bigr)}{|f_{\lambda}(\lambda_i)|^2},\ \ 
\frac{\partial \lambda_i^I}{\partial \eta}=-\frac{\im\bigl(f_\eta(\lambda_i)\overline{f_\lambda(\lambda_i)}\bigr)}{|f_{\lambda}(\lambda_i)|^2}.
\end{align}
By using \eqref{31}, we can also calculate the second derivatives of $\lambda_i^R$ and $\lambda_i^I$ which give us the drift terms in the stochastic differential equations \eqref{42} and the quadratic variations in \eqref{44} and \eqref{45}. For a $C^2$ function $g: \R^{N^2} \to \R$,  we define the gradient and Laplacian of $g$ by
\begin{align*}
&\nabla g :=\left(\frac{\partial g}{\partial x_{11}},\frac{\partial g}{\partial x_{12}},\cdots,\frac{\partial g}{\partial x_{NN}},\frac{\partial g}{\partial \alpha_{11}},\frac{\partial g}{\partial \alpha_{12}},\cdots,\frac{\partial g}{\partial \alpha_{NN}},\frac{\partial g}{\partial y_{12}}\cdots,\frac{\partial g}{\partial y_{N-1 N}},\frac{\partial g}{\partial \beta_{12}}\cdots,\frac{\partial g}{\partial \beta_{N-1 N}}\right),\\
&\Delta g:=\sum_{k=1}^N\Bigl(\frac{\partial^2}{\partial x_{kk}^2}+\frac{\partial^2}{\partial \alpha_{kk}^2}\Bigr)g+\sum_{k<\ell}\Bigl(\frac{\partial^2}{\partial x_{k\ell}^2}+\frac{\partial^2}{\partial y_{k\ell}^2}+\frac{\partial^2}{\partial \alpha_{k\ell}^2}+\frac{\partial^2}{\partial \beta_{k\ell}^2}\Bigr)g.
\end{align*}
In the notation, a key lemma holds as follows.
\begin{lemma}\label{Lap} For $i=1,\cdots,N$, $\lambda_i^R$ and $\lambda_i^I$ are $C^2$ function. We have
\begin{align}\label{422}
\Delta \lambda_i^R=\tau\frac{\re(\overline{f_\lambda(\lambda_i)}f_{\lambda\lambda}(\lambda_i))}{|f_\lambda(\lambda_i)|^2},\ \ 
\Delta \lambda_i^I=\tau\frac{\im(\overline{f_\lambda(\lambda_i)}f_{\lambda\lambda}(\lambda_i))}{|f_\lambda(\lambda_i)|^2}.
\end{align}
Moreover, the inner products of the gradients of $\lambda_i^R$ and $\lambda_i^I$ are following:
\begin{eqnarray}
& &\begin{aligned}
\label{nabii}
&\nabla \lambda_i^R \cdot \nabla \lambda_i^R
=\frac{1}{2}\Biggl(\frac{\sum_{k,\ell=1}^N\Bigl|\det\bigl((\lambda_i I-J)_{k|\ell}\bigr)\Bigr|^2}{|f_\lambda(\lambda_i)|^2}+\tau\Biggr),\\
&\nabla \lambda_i^I \cdot \nabla \lambda_i^I
=\frac{1}{2}\Biggl(\frac{\sum_{k,\ell=1}^N\Bigl|\det\bigl((\lambda_i I-J)_{k|\ell}\bigr)\Bigr|^2}{|f_\lambda(\lambda_i)|^2}-\tau\Biggr),\\
&\nabla \lambda_i^R \cdot \nabla \lambda_i^I=0,
\end{aligned}\\
& &\begin{aligned}
\label{nabij}
\nabla \lambda_i^R \cdot \nabla \lambda_j^R&=\nabla \lambda_i^I \cdot \nabla \lambda_j^I=\frac{1}{2}\re\left(\frac{\sum_{k=1}^N\det\Bigl(((\lambda_i I-J)(\lambda_j I-J)^*)_{k|k}\Bigr)}{f_\lambda(\lambda_i)\overline{f_\lambda(\lambda_j)}}\right),\\
-\nabla \lambda_i^R \cdot \nabla \lambda_j^I&=\nabla \lambda_i^I \cdot \nabla \lambda_j^R=\frac{1}{2}\im\left(\frac{\sum_{k=1}^N\det\Bigl(((\lambda_i I-J)(\lambda_j I-J)^*)_{k|k}\Bigr)}{f_\lambda(\lambda_i)\overline{f_\lambda(\lambda_j)}}\right),\ \ i \neq j.
\end{aligned}
\end{eqnarray}
\end{lemma}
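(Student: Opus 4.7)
The plan is to differentiate the implicit relation $f(\lambda_i(J),J)=0$ to second order and then rearrange the resulting sums over the $2N^2$ real parameters $\{x_{k\ell},y_{k\ell},\alpha_{k\ell},\beta_{k\ell}\}$ back into sums over the $N^2$ independent complex entries $J_{pq}$. Since $f$ is polynomial, $\lambda_i$ is holomorphic in each $J_{pq}$ treated as an independent complex variable, and cofactor expansion combined with \eqref{31} gives the first-order Jacobian
\begin{align*}
D^{(i)}_{pq}:=\frac{\partial \lambda_i}{\partial J_{pq}}=\frac{(-1)^{p+q}\det((\lambda_iI-J)_{p|q})}{f_\lambda(\lambda_i)};
\end{align*}
the $C^2$ regularity of $\lambda_i^R$ and $\lambda_i^I$ is immediate from the implicit function theorem under the simple-spectrum assumption.

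For the gradient inner products, I would read off from \eqref{41} the coefficients $\partial J_{k\ell}/\partial\eta,\,\partial J_{\ell k}/\partial\eta$ attached to each off-diagonal pair $k<\ell$, apply $\partial_\eta\lambda_i=D^{(i)}_{k\ell}\partial_\eta J_{k\ell}+D^{(i)}_{\ell k}\partial_\eta J_{\ell k}$, and aggregate. A careful but routine computation in which the factors $\frac{\sqrt{1\pm\tau}}{2}$ and the signs $(\sqrt{-1})^2=-1$ arising from the $y,\alpha$ contributions combine exactly right yields, after summing over all pairs and diagonal indices,
\begin{align*}
\sum_\eta\overline{\partial_\eta\lambda_i}\,\partial_\eta\lambda_j=\sum_{p,q}\overline{D^{(i)}_{pq}}D^{(j)}_{pq},\qquad \sum_\eta\partial_\eta\lambda_i\,\partial_\eta\lambda_j=\tau\sum_{p,q}D^{(i)}_{pq}D^{(j)}_{qp},
\end{align*}
so the $\tau$-dependence cancels in the conjugated sum and survives with a transposed index pairing in the unconjugated one. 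Splitting these two identities into real and imaginary parts yields the six bilinear forms in \eqref{nabii}--\eqref{nabij}.

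To identify the right-hand sides with the overlap data in \eqref{46}, I would apply the Cauchy-Binet formula to each principal minor $((\lambda_iI-J)(\lambda_jI-J)^*)_{k|k}$, obtaining $\sum_q\det((\lambda_iI-J)_{k|q})\overline{\det((\lambda_jI-J)_{k|q})}$; after combining the twin $(-1)^{k+q}$ cofactor signs, this matches the first sum exactly. For the second sum, I would use simple spectrum to identify $\mathrm{adj}(\lambda_iI-J)=f_\lambda(\lambda_i)R_iL_i^*/(L_i^*R_i)$ in terms of the right and left eigenvectors $R_i,L_i$ of $J$ at $\lambda_i$, so that $D^{(i)}_{pq}=R_i(q)\overline{L_i(p)}/(L_i^*R_i)$. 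Then biorthogonality $L_i^*R_j\propto\delta_{ij}$ collapses $\sum_{p,q}D^{(i)}_{pq}D^{(j)}_{qp}=(L_i^*R_j)(L_j^*R_i)/[(L_i^*R_i)(L_j^*R_j)]=\delta_{ij}$; the resulting $\tau\delta_{ij}$ term is precisely what distinguishes the diagonal case in \eqref{nabii} from the off-diagonal case in \eqref{nabij}.

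For the Laplacian formulae \eqref{422}, the same bookkeeping applied to $\sum_\eta\partial_\eta^2\phi$ for any function $\phi(J)$ holomorphic in each $J_{pq}$ collapses to $\Delta\phi=\tau\sum_{p,q}\partial_{J_{pq}}\partial_{J_{qp}}\phi$; the non-$\tau$ pieces from the $\sqrt{1+\tau}H_1$ and $\sqrt{-1}\sqrt{1-\tau}H_2$ blocks cancel pairwise because of the $\sqrt{-1}$. Substituting $\phi=\lambda_i$ and invoking second-order eigenvalue perturbation theory, I would express $\partial_{J_{pq}}\partial_{J_{qp}}\lambda_i$ through the eigenvectors of $J$ summed over $k\neq i$; the double sum over $p,q$ then telescopes via biorthogonality to $2\sum_{k\neq i}(\lambda_i-\lambda_k)^{-1}=f_{\lambda\lambda}(\lambda_i)/f_\lambda(\lambda_i)$. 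Therefore $\Delta\lambda_i=\tau f_{\lambda\lambda}(\lambda_i)/f_\lambda(\lambda_i)$, whose real and imaginary parts are \eqref{422}. The hard part is the bookkeeping in the passage from real to complex parameters: one must verify case-by-case that the four real-variable contributions attached to each off-diagonal block combine to erase $\tau$ in the conjugated sum while producing precisely $\tau$ (with transposed indices) in the unconjugated sum, and analogously for the Laplacian. Once this algebraic skeleton is in place, the Cauchy-Binet and biorthogonality identifications are routine.
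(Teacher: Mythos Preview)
Your argument is correct and the route is genuinely different from the paper's. The paper never touches eigenvectors: it computes $f_{x_{k\ell}},f_{y_{k\ell}},f_{\alpha_{k\ell}},f_{\beta_{k\ell}}$ and their second derivatives directly via cofactor expansion, assembles $\nabla f\cdot\nabla f$, $\nabla f\cdot\nabla\bar f$, $\Delta f$, $\nabla f_\lambda\cdot\nabla f$ as determinantal sums, and then feeds these into the implicit-function expressions (3.4)--(3.8) for $\Delta\lambda_i^{R,I}$ and the gradient products. The key cancellation---that $\sum_{p,q}D^{(i)}_{pq}D^{(j)}_{qp}=\delta_{ij}$---is obtained in the paper purely algebraically: for $i=j$ by the minor identity $\sum_k\det((\lambda_iI-J)^2_{k|k})=f_\lambda(\lambda_i)^2$ (their Lemma~3.2), and for $i\neq j$ by observing that $(\lambda_iI-J)(\lambda_jI-J)$ has two zero eigenvalues so the trace of its adjugate vanishes. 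You instead package the real-to-complex bookkeeping once and for all into the covariance relations \eqref{corr} (your sums $\sum_\eta\partial_\eta J_{pq}\,\partial_\eta J_{rs}$ are exactly those), then invoke the rank-one representation $\mathrm{adj}(\lambda_iI-J)=f_\lambda(\lambda_i)R_iL_i^*/(L_i^*R_i)$ and biorthogonality to collapse the transposed sum, and second-order eigenvalue perturbation to evaluate $\sum_{p,q}\partial_{J_{pq}}\partial_{J_{qp}}\lambda_i$. Your approach is shorter and makes the mechanism transparent (the $\tau$ enters only through the correlation structure, and the $\delta_{ij}$ is literally biorthogonality); the paper's approach is entirely self-contained in terms of determinants and characteristic-polynomial derivatives, which is a deliberate choice---it remarks after Theorem~2.1 that it does \emph{not} use non-orthogonality of eigenvectors. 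Both rely on Cauchy--Binet for the final identification with the overlap expression \eqref{46}.
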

We give the proof of Lemma \ref{Lap} in subsection 3.2. As a result, we can derive the stochastic differential equations  \eqref{42} and the quadratic variations \eqref{44}-\eqref{46}. Under the assumption that $J$ has simple spectrum, we are able to use the above calculus and apply Ito's formula for $\lambda_i^R$ and $\lambda_i^I$  until first collision time $T_{\rm col}$ defined as \eqref{col}. Up to the time $T_{\rm col}$, we have
\begin{align}\label{424}
d\lambda_i(t)&=d\lambda_i^R(t)+\sqrt{-1}d\lambda_i^I(t)\nonumber\\
&=\sum_{k=1}^N\left(\frac{\partial \lambda_i^R}{\partial x_{kk}}+\sqrt{-1}\frac{\partial \lambda_i^I}{\partial x_{kk}}\right)dB_{kk}(t) +\sum_{k=1}^N\left(\frac{\partial \lambda_i^R}{\partial \alpha_{kk}}+\sqrt{-1}\frac{\partial \lambda_i^I}{\partial \alpha_{kk}}\right)db_{kk}(t)\nonumber\\
&+\sum_{k<\ell}\left\{\left(\frac{\partial \lambda_i^R}{\partial x_{k\ell}}+\sqrt{-1}\frac{\partial \lambda_i^I}{\partial x_{k\ell}}\right)dB_{k\ell}^R(t)+\left(\frac{\partial \lambda_i^R}{\partial y_{k\ell}}+\sqrt{-1}\frac{\partial \lambda_i^I}{\partial y_{k\ell}}\right)dB_{k\ell}^I(t)\right\}\nonumber\\
&+\sum_{k<\ell}\left\{\left(\frac{\partial \lambda_i^R}{\partial \alpha_{k\ell}}+\sqrt{-1}\frac{\partial \lambda_i^I}{\partial \alpha_{k\ell}}\right)db_{k\ell}^R(t)+\left(\frac{\partial \lambda_i^R}{\partial \beta_{k\ell}}+\sqrt{-1}\frac{\partial \lambda_i^I}{\partial \beta_{k\ell}}\right)db_{k\ell}^I(t)\right\}\nonumber\\
&\ +\frac{1}{2}\left(\Delta \lambda_i^R(t)+\sqrt{-1}\Delta \lambda_i^I(t)\right)dt.
\end{align}
For the local martingale part of \eqref{424}, we have $\frac{\partial \lambda_i^R}{\partial \eta}+\sqrt{-1}\frac{\partial \lambda_i^I}{\partial \eta}=-\frac{f_\eta(\lambda_i)}{f_\lambda(\lambda_i)}$ by \eqref{31}. Using \eqref{derixkk}, \eqref{derixkl} and \eqref{derideri} in the next subsection, we find
\begin{align*}
&\left(\frac{\partial \lambda_i^R}{\partial x_{kk}}+\sqrt{-1}\frac{\partial \lambda_i^I}{\partial x_{kk}}\right)dB_{kk}(t)+\left(\frac{\partial \lambda_i^R}{\partial \alpha_{kk}}+\sqrt{-1}\frac{\partial \lambda_i^I}{\partial \alpha_{kk}}\right)db_{kk}(t)
=\frac{\det\bigl((\lambda_i(t) I-J(t))_{k|k}\bigr)}{f_\lambda(\lambda_i(t))}dJ_{kk}(t),\\
&\left(\frac{\partial \lambda_i^R}{\partial x_{k\ell}}+\sqrt{-1}\frac{\partial \lambda_i^I}{\partial x_{k\ell}}\right)dB_{k\ell}^R(t)+\left(\frac{\partial \lambda_i^R}{\partial y_{k\ell}}+\sqrt{-1}\frac{\partial \lambda_i^I}{\partial y_{k\ell}}\right)dB_{k\ell}^I(t)\\
&+\left(\frac{\partial \lambda_i^R}{\partial \alpha_{k\ell}}+\sqrt{-1}\frac{\partial \lambda_i^I}{\partial \alpha_{k\ell}}\right)db_{k\ell}^R(t)+\left(\frac{\partial \lambda_i^R}{\partial \beta_{k\ell}}+\sqrt{-1}\frac{\partial \lambda_i^I}{\partial \beta_{k\ell}}\right)db_{k\ell}^I(t)\\
&=\frac{1}{f_\lambda(\lambda_i(t))}\Bigl((-1)^{k+\ell}\det\bigl((\lambda_i(t) I-J(t))_{k|\ell}\bigr)J_{k\ell}(t)+(-1)^{k+\ell}\det\bigl((\lambda_i(t) I-J(t))_{\ell|k}\bigr)J_{\ell k}(t)\Bigr).
\end{align*}
Therefore, the local martingale part of \eqref{424} is 
\begin{align}\label{425}
\sum_{k,\ell=1}^N\frac{(-1)^{k+\ell}\det\bigl((\lambda_i(t) I-J(t))_{k|\ell}\bigr)}{\prod_{j (\neq i)}(\lambda_i(t)-\lambda_j(t))}dJ_{k\ell}(t).
\end{align}
For the drift part of \eqref{424}, by \eqref{422} and Lemma \ref{lemA1}, we have
\begin{align}\label{426}
\frac{1}{2}\left(\Delta \lambda_i^R(t)+\sqrt{-1}\Delta \lambda_i^I(t)\right)
=\tau\frac{1}{2}\frac{f_{\lambda\lambda}(\lambda_i(t))}{f_\lambda(\lambda_i(t))}=\tau\sum_{j(\neq i)}\frac{1}{\lambda_i(t)-\lambda_j(t)}.
\end{align}
From \eqref{424}-\eqref{426}, we obtain the stochastic differential equations \eqref{42} for $t \in [0, T_{\rm col})$. Next, we derive the quadratic variations \eqref{44}-\eqref{46}. Because the local martingale part of $\lambda_i^R$ and $\lambda_i^I$ are constructed by $2N^2$ independent Brownian motions as \eqref{424}, we immediately find that
\begin{eqnarray*}
& &d\langle \lambda_i^R\rangle_t=\nabla \lambda_i^R(t) \cdot \nabla \lambda_i^R(t)dt,\ \ 
d\langle \lambda_i^I\rangle_t=\nabla \lambda_i^I(t) \cdot \nabla \lambda_i^I(t)dt,\\
& &d\langle \lambda_i^{\natural},\lambda_j^{\sharp}\rangle_t=\nabla \lambda_i^{\natural}(t) \cdot \nabla \lambda_j^{\sharp}(t)dt, \ \ \natural,\ \sharp = R,\ I.
\end{eqnarray*}
By Lemma \ref{lemA4}, we rewrite the summation of the numerator of \eqref{nabii} as
\begin{align*}
\sum_{k,\ell=1}^N\Bigl|\det\bigl((\lambda_i I-J)_{k|\ell}\bigr)\Bigr|^2&=\sum_{k,\ell=1}^N\det\bigl((\lambda_i I-J)_{k|\ell}\bigr)\det\bigl((\lambda_i I-J)^*_{\ell | k}\bigr)\\
&=\sum_{k=1}^N\det\Bigl(((\lambda_i I-J)(\lambda_i I-J)^*)_{k|k}\Bigr).
\end{align*}
Therefore, by \eqref{nabii} and \eqref{nabij}, \eqref{44}-\eqref{46} hold.
At the end of this subsection, we show that the stochastic differential equations \eqref{42} actually hold for $t \in [0, \infty)$.
\begin{prop}\label{prop3}
Under the condition \eqref{21}, $T_{\rm col}=\infty,\ \text{a.s.}$
\end{prop}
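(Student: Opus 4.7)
The plan is to show non-collision via a Lyapunov argument on $L(t) := \log|\Delta(\lam(t))|^2$, where $\Delta(\lam) := \prod_{i<j}(\lambda_i - \lambda_j)$ is the Vandermonde determinant. By Theorem \ref{thm1} the process $\lam$ is a continuous semimartingale on $[0, T_{\rm col})$, so $L$ is finite there and $T_{\rm col} = \inf\{t : L(t) = -\infty\}$. Introduce the localizing stopping times $\sigma_n := \inf\{t : L(t) \leq -n \text{ or } \tr(J(t)J(t)^*) \geq n\}$; the goal is to show $\sigma_n \to \infty$ a.s., equivalently that $\E[L(t \wedge \sigma_n)]$ is bounded below uniformly in $n$ on any compact time interval.

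Apply the holomorphic Ito formula to $\log \Delta(\lam(t))$ using the quadratic variations $d\langle \lambda_i, \lambda_j\rangle_t = \tau \delta_{ij}\,dt$ and $d\langle \lambda_i, \overline{\lambda_j}\rangle_t = \Ov_{ij}(t)\,dt$ that follow from \eqref{44}--\eqref{45} and \eqref{cqv}. The key algebraic input is the identity
\begin{align*}
\sum_k \Bigl(\sum_{j\neq k}\frac{1}{\lambda_k - \lambda_j}\Bigr)^2 = 2\sum_{i<k}\frac{1}{(\lambda_i - \lambda_k)^2},
\end{align*}
a residue-at-infinity computation for $P(z) = \prod_i(z - \lambda_i)$, equivalent to $\sum_k \partial_k^2 \Delta = 0$. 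This cancels the naive $\tau$-contribution from the Ito second-order term; combined with the drift from \eqref{42}, the drift of $\log\Delta(\lam(t))$ collapses to $\tau\sum_{i<k}(\lambda_i - \lambda_k)^{-2}\,dt$, so that
\begin{align*}
dL(t) = 2\sum_k \re(S_k\,dM_k) + 2\tau\,\re\Bigl[\sum_{i<k}(\lambda_i(t) - \lambda_k(t))^{-2}\Bigr]\,dt,
\end{align*}
where $S_k := \sum_{j\neq k}(\lambda_k - \lambda_j)^{-1}$ and $dM_k$ denotes the local-martingale part of \eqref{42}.

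For $\tau = 1$ (Dyson on the real axis) and $\tau = -1$ (Dyson on the imaginary axis) the drift is pointwise nonnegative, so $L$ is a submartingale bounded above on $[0, \sigma_n]$ and cannot diverge to $-\infty$. For $\tau = 0$ (Ginibre) the drift vanishes and the local-martingale argument of \cite{BD} applies. For the remaining $\tau \in (-1, 1)\setminus\{0\}$ the drift is sign-indefinite, and one additionally controls the martingale quadratic variation
\begin{align*}
d\langle L^{\mathrm{mart}}\rangle_t = 4\tau\,\re\Bigl[\sum_{i<k}(\lambda_i - \lambda_k)^{-2}\Bigr]dt + 2\sum_{i,j}\Ov_{ij}(t)S_i\bar S_j\,dt,
\end{align*}
using the Cauchy-Schwarz bound $|\Ov_{ij}|^2 \leq \Ov_{ii}\Ov_{jj}$ and the fact that the overlap numerator in \eqref{46} is a polynomial in the entries of $J(t)$, hence bounded on $\{\tr(JJ^*)\leq n\}$. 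Optional stopping combined with the convexity inequality $L(t) \geq N(N-1)\log\min_{i<j}|\lambda_i(t) - \lambda_j(t)|$---which translates a lower bound on $\E[L]$ into a lower bound on the minimal spacing---then closes the estimate.

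The main obstacle is closing this estimate for $\tau \in (-1, 1)\setminus\{0\}$: the drift magnitude $|\re \sum(\lambda_i - \lambda_k)^{-2}|$ can grow much faster than $|L|$, obstructing a direct Gronwall argument, and the overlap-driven martingale quadratic variation itself becomes singular as eigenvalues cluster (already visible in \eqref{435.5} for $N = 2$). The harmonic-type identity $\sum_k \partial_k^2 \Delta = 0$ is precisely what prevents the Ito correction from producing a stronger singularity, and the positive-semidefiniteness of the overlap matrix yields $\sum_{i,j}\Ov_{ij}S_i\bar S_j \geq 0$, which together with the numerator bound from \eqref{46} provides the cancellations needed for the estimate to close; alternatively, one may work with $|\Delta|^{2a}$ for $a \in (-1,0)$ chosen so that the resulting Ito drift is sign-controllable.
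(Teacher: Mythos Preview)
Your algebra is correct, and your treatment of $\tau\in\{-1,0,1\}$ is fine (the paper itself defers $|\tau|=1$ to \cite{RS}). The gap is exactly where you flag it: for $\tau\in(-1,1)\setminus\{0\}$ the drift $2\tau\,\re\sum_{i<k}(\lambda_i-\lambda_k)^{-2}$ of $L$ is sign-indefinite and as singular as the quantity you are trying to bound, so no Gronwall-type closure is available. Your proposed fixes do not work either: the facts you invoke (positive semidefiniteness of $(\Ov_{ij})$, polynomial overlap numerators on $\{\tr(JJ^*)\le n\}$) control only quadratic-variation terms, while the It\^o drift of $|\Delta|^{2a}$ always contains the nonnegative contribution $a^2|\Delta|^{2a}\sum_{i,j}\Ov_{ij}S_i\bar S_j$, which the remaining term $a(a+1)\tau\,\re\sum_iS_i^2$ cannot dominate with a definite sign for any $a\in(-1,0)$. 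You therefore never obtain the nonnegative supermartingale that would rule out $|\Delta|\to0$.

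The paper's remedy is to change the test function rather than to estimate. It applies the holomorphic It\^o formula to $U:=\Delta^{-1}$ instead of to $\log|\Delta|^2$. With $d\langle\lambda_i,\lambda_j\rangle_t=\tau\delta_{ij}\,dt$ the drift of $U$ reduces to $-\tau U$ times the triple sum of Lemma~\ref{lem2}, which vanishes identically for \emph{every} $\tau$; this is your identity $\sum_k\partial_k^2\Delta=0$ in equivalent form. Hence $U$ is a complex local martingale on $[0,T_{\rm col})$, its real and imaginary parts are real local martingales, and a Dambis--Dubins--Schwarz time change turns each into a one-dimensional Brownian motion, which cannot tend to $\pm\infty$. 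You are in fact one exponentiation away from this argument: writing $U=e^{-\log\Delta}$, your drift $\tau\sum_{i<k}(\lambda_i-\lambda_k)^{-2}\,dt$ for $\log\Delta$ is cancelled exactly by $\tfrac12\,d\langle\log\Delta,\log\Delta\rangle_t=\tfrac{\tau}{2}\sum_iS_i^2\,dt$, since $\sum_iS_i^2=2\sum_{i<k}(\lambda_i-\lambda_k)^{-2}$ is the very identity you already proved. Working with $\log|\Delta|^2$ discards the phase of $\Delta$, and it is precisely the holomorphic structure that carries this cancellation.
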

\begin{proof}
Rogers and Shi showed that collision time is infinity almost surely for general stochastic differential equations which include Dyson's model \cite{RS}. Hence we have only  to show the claim for $-1<\tau<1$. However, their method does not work for our complex eigenvalue processes straightforward because the martingale terms have the correlations \eqref{44}-\eqref{46}. Accordingly, we refer to the method in \cite{McKean, Bru}. Assume that $T_{\rm col} < \infty$ and define $U : \C^N \to \C$ as follows: 
\begin{align}\label{434}
U(z_1,\cdots, z_n):=\prod_{i<j}\frac{1}{z_i-z_j}.
\end{align}
$U$ is an analytic function with respect to $z_i,\ i=1,\cdots,N$  and the derivatives of $U$ are 
\begin{align*}
&\partial_{z_i}U=-\sum_{j(\neq i)}\frac{1}{z_i-z_j}U,\\
&\partial_{z_i}\partial_{z_i}U=2\sum_{j(\neq i)}\frac{1}{(z_i-z_j)^2}U+2\sum_{\substack{j<k\\ j,k\neq i}}\frac{1}{(z_i-z_j)(z_i-z_k)}U.
\end{align*}
From \eqref{44} and \eqref{45}, we obtain the complex\ quadratic\ variations of the eigenvalue processes as 
\begin{align}
\begin{aligned}\label{435}
&d\langle \lambda_i,\lambda_i \rangle_t=\tau dt,\\
&d\langle \lambda_i,\lambda_j \rangle_t=0,\ d\langle \lambda_i,\overline{\lambda_j} \rangle_t=\Ov_{ij}(t)dt, \ i \neq j.
\end{aligned}
\end{align}
We denote $d\lambda^{(i)}_t=dM^{(i)}_t+\tau\sum_{j(\neq i)}\frac{1}{\lambda^{(i)}_t-\lambda^{(j)}_t}dt,\ i=1,\cdots,N$. Applying Lemma \ref{lemA5} to $U$ and the eigenvalue processes for $t \in [0,T_{\rm col})$ with \eqref{435},  we obtain
\begin{align*}
dU(\lam_t)&=\sum_{i=1}^N\partial_{z_i}U(\lam_t)d\lambda^{(i)}_t+\frac{\tau}{2}\sum_{i=1}^N\partial_{z_i}\partial_{z_i}U(\lam_t)dt\\
&=-U(\lam_t)\sum_{i=1}^N\sum_{j(\neq i)}\frac{1}{\lambda^{(i)}_t-\lambda^{(j)}_t}dM^{(i)}_t
-\tau U(\lam_t)\sum_{i=1}^N\left(\sum_{j(\neq i)}\frac{1}{\lambda^{(i)}_t-\lambda^{(j)}_t}\right)^2dt\\
&\ \ \ +\frac{\tau}{2}U(\lam_t)\sum_{i=1}^N\left(2\sum_{j(\neq i)}\frac{1}{(\lambda^{(i)}_t-\lambda^{(j)}_t)^2}+
2\sum_{\substack{j<k\\ j,k\neq i}}\frac{1}{(\lambda^{(i)}_t-\lambda^{(j)}_t)(\lambda^{(i)}_t-\lambda^{(k)}_t)}\right)dt\\
&=-U(\lam_t)\sum_{i=1}^N\sum_{j(\neq i)}\frac{1}{\lambda^{(i)}_t-\lambda^{(j)}_t}dM^{(i)}_t
-\tau U(\lam_t)\sum_{i=1}^N\sum_{\substack{j<k\\ j,k\neq i}}\frac{1}{(\lambda^{(i)}_t-\lambda^{(j)}_t)(\lambda^{(i)}_t-\lambda^{(k)}_t)}dt.
\end{align*}
Indeed, the last summation vanishes.
\begin{lemma}\label{lem2}
\begin{align*}
\sum_{i=1}^N\sum_{\substack{j<k\\ j,k\neq i}}\frac{1}{(z_i-z_j)(z_i-z_k)}=0.
\end{align*}
\end{lemma}
\begin{proof}
\begin{align*}
(LHS)&=\sum_{i<j<k}\frac{1}{(z_i-z_j)(z_i-z_k)}+\sum_{j<i<k}\frac{1}{(z_i-z_j)(z_i-z_k)}+\sum_{j<k<i}\frac{1}{(z_i-z_j)(z_i-z_k)}\\
&=\sum_{i<j<k}\left(\frac{1}{(z_i-z_j)(z_i-z_k)}-\frac{1}{(z_i-z_j)(z_j-z_k)}+\frac{1}{(z_k-z_i)(z_k-z_j)}
\right)=0.
\end{align*}
\end{proof}
Therefore, we find $dU(\lam_t)=-U(\lam_t)\sum_{i=1}^N\sum_{j(\neq i)}\frac{1}{\lambda^{(i)}_t-\lambda^{(j)}_t}dM^{(i)}_t,\ t \in [0,T_{\rm col})$, and so $U(\lam_t)$ is a complex\ local\ martingale. We rewrite $U(\lam_t)=U^R_t+\sqrt{-1}U^I_t$. In the limit of $t \to T_{\rm col}$, by definition of $U$ the radial part diverges. Hence either one of the divergence holds:
\begin{itemize}
\item $\lim_{t \to T_{\rm col}}|U^R_t|=\infty$,
\item $\lim_{t \to T_{\rm col}}|U^I_t|=\infty$.
\end{itemize}
In the former case, $U^R_t$ is an one-dimensional local martingale whose real quadratic variation is
\begin{align*}
\langle U^R\rangle_t&=\frac{1}{2}\int_0^t\left(d\langle U,\overline{U}\rangle_s+\frac{d\langle U,U\rangle_s+d\langle \overline{U},\overline{U}\rangle_s}{2}\right)ds\\
&=\int_0^t\Biggl(|U(\lam_s)|^2\sum_i\ \Bigl|\sum_{k(\neq i)}\frac{1}{\lambda^{(i)}_s-\lambda^{(k)}_s}\Bigr|^2\Ov_{ii}(s)+2\tau\re\biggl(U(\lam_s)^2\sum_{k(\neq i)}\frac{1}{(\lambda^{(i)}_s-\lambda^{(k)}_s)^2}\biggr)\\
&\ \ \ \ \ \ +|U(\lam_s)|^2\sum_{i<j}\biggl(\sum_{k(\neq i)}\frac{1}{\lambda^{(i)}_s-\lambda^{(k)}_s}\biggr)\biggl(\sum_{\ell(\neq j)}\frac{1}{\overline{\lambda}^{(j)}_s-\overline{\lambda}^{(\ell)}_s}\biggr)\Ov_{ij}(s)
\Biggr)ds.
\end{align*}
We define $T^R(t):={\rm inf}\{u \ge0; \langle U^R\rangle_u >t\}$, and so the time changed process $B^R_t:=U^R_{T^R(t)},\ t \in [0,\langle U^R\rangle_{T_{\rm col}})$ is an one-dimensional Brownian motion in the usual manner. Hence 
\begin{align*}
\lim_{t \to \langle U^R\rangle_{T_{\rm col}}}|B^R_t|=\lim_{t \to T_{\rm col}}|U^R_t|=\infty
\end{align*}
which never occur by the properties of Brownian  motion's paths \cite{KS}. In the latter case, we also have the same contradiction by applying time change to $U^I_t$ with the real quadratic variation 
\begin{align*}
\langle U^I\rangle_t
&=\int_0^t\Biggl(|U(\lam_s)|^2\sum_i\ \Bigl|\sum_{k(\neq i)}\frac{1}{\lambda^{(i)}_s-\lambda^{(k)}_s}\Bigr|^2\Ov_{ii}(s)-2\tau\re\biggl(U(\lam_s)^2\sum_{k(\neq i)}\frac{1}{(\lambda^{(i)}_s-\lambda^{(k)}_s)^2}\biggr)\\
& \ \ \ \ \ \ +|U(\lam_s)|^2\sum_{i<j}\biggl(\sum_{k(\neq i)}\frac{1}{\lambda^{(i)}_s-\lambda^{(k)}_s}\biggr)\biggl(\sum_{\ell(\neq j)}\frac{1}{\overline{\lambda}^{(j)}_s-\overline{\lambda}^{(\ell)}_s}\biggr)\Ov_{ij}(s)\Biggr)ds.
\end{align*}
Therefore, we have the contradiction in both cases, and the claim holds.
\end{proof}
Together with Proposition \ref{prop3}, we complete the proof of Theorem \ref{thm1}.


\subsection{Proof of Lemma \ref{Lap}}
To show Lemma \ref{Lap}, we need to calculate the first and second derivatives of $\lambda_i^R$ and $\lambda_i^I$ which are described by those of $f$ as a result of implicit function theorem. For $\eta=x_{k\ell},\alpha_{k\ell},y_{k\ell},\beta_{k\ell}$, we apply chain rule to  the first derivatives of $\lambda_i^R$ and $\lambda_i^I$ in \eqref{31} and obtain
\begin{align}\label{32}
\frac{\partial^2 \lambda_i^R}{\partial \eta^2}=&
-\frac{\re\bigl(f_{\eta,\eta}(\lambda_i)\overline{f_\lambda(\lambda_i)}\bigr)
+2(\lambda_{i,\eta}^R)^2\re\bigl(\overline{f_\lambda(\lambda_i)}f_{\lambda\lambda}(\lambda_i)\bigr)-2\lambda_{i,\eta}^R \lambda_{i,\eta}^I\im\bigl(\overline{f_\lambda(\lambda_i)}f_{\lambda\lambda}(\lambda_i)\bigr)}{|f_{\lambda}(\lambda_i)|^2}\nonumber\\
&+\frac{2\re\bigl(\overline{(f_{\lambda}(\lambda_i)}^2f_{\lambda\eta}(\lambda_i)f_\eta(\lambda_i)\bigr)
 +\re\bigl(\overline{f_\lambda(\lambda_i)}f_{\lambda\lambda}(\lambda_i)\bigr)|f_\eta(\lambda_i)|^2}{|f_{\lambda}(\lambda_i)|^4},
 \end{align}
 \begin{align}\label{33}
\frac{\partial^2 \lambda_i^I}{\partial \eta^2}=&
 -\frac{\im\bigl(f_{\eta,\eta}(\lambda_i)\overline{f_\lambda(\lambda_i)}\bigr)
-2(\lambda_{i,\eta}^I)^2\im\bigl(\overline{f_\lambda(\lambda_i)}f_{\lambda\lambda}(\lambda_i)\bigr)
-2\lambda_{i,\eta}^R \lambda_{i,\eta}^I\re\bigl(\overline{f_\lambda(\lambda_i)}f_{\lambda\lambda}(\lambda_i)\bigr)}{|f_{\lambda}(\lambda_i)|^2}\nonumber\\
&+\dfrac{2\im\bigl(\overline{(f_{\lambda}(\lambda_i)}^2f_{\lambda\eta}(\lambda_i)f_\eta(\lambda_i)\bigr)
 -\im\bigl(\overline{f_\lambda(\lambda_i)}f_{\lambda\lambda}(\lambda_i)\bigr)|f_\eta(\lambda_i)|^2}{|f_{\lambda}(\lambda_i)|^4}.
\end{align}

Taking the summation in \eqref{32} and \eqref{33} for $\eta=x_{k\ell},\alpha_{k\ell},y_{k\ell},\beta_{k\ell}$, we yield
\begin{align}\label{213}
&\Delta \lambda_i^R=
-\dfrac{\re\bigl(\Delta f(\lambda_i)\overline{f_{\lambda}(\lambda_i)}\bigr)
+2\nabla \lambda_i^R \cdot \nabla \lambda_i^R\re\bigl(\overline{f_\lambda(\lambda_i)}f_{\lambda\lambda}(\lambda_i)\bigr)
-2 \nabla \lambda_i^R\cdot \nabla \lambda_i^I\im\bigl(\overline{f_\lambda(\lambda_i)}f_{\lambda\lambda}(\lambda_i)\bigr)}{|f_{\lambda}(\lambda_i)|^2}\nonumber\\
&+\dfrac{2\re\bigl(\overline{f_{\lambda}(\lambda_i)}^2\nabla f_{\lambda}(\lambda_i) \cdot \nabla f(\lambda_i)\bigr)
+\re\bigl(\overline{f_\lambda(\lambda_i)}f_{\lambda\lambda}(\lambda_i)\bigr)\left(\nabla f^R(\lambda_i) \cdot \nabla f^R(\lambda_i)+\nabla f^I(\lambda_i) \cdot \nabla f^I(\lambda_i)\right)}{|f_{\lambda}(\lambda_i)|^4},
\end{align}
\begin{align}\label{214} 
&\Delta \lambda_i^I=
 -\frac{\im\bigl(\Delta f(\lambda_i)\overline{f_{\lambda}(\lambda_i)}\bigr)
-2\nabla \lambda_i^I \cdot \nabla \lambda_i^I\im\bigl(\overline{f_\lambda(\lambda_i)}f_{\lambda\lambda}(\lambda_i)\bigr)
-2 \nabla \lambda_i^R\cdot \nabla \lambda_i^I\re\bigl(\overline{f_\lambda(\lambda_i)}f_{\lambda\lambda}(\lambda_i)\bigr)}{|f_{\lambda}(\lambda_i)|^2}\nonumber\\
&+\dfrac{2\im\bigl(\overline{f_{\lambda}(\lambda_i)}^2\nabla f_{\lambda}(\lambda_i) \cdot \nabla f(\lambda_i)\bigr)
-\im\bigl(\overline{f_\lambda(\lambda_i)}f_{\lambda\lambda}(\lambda_i)\bigr)\left(\nabla f^R(\lambda_i) \cdot \nabla f^R(\lambda_i)+\nabla f^I(\lambda_i) \cdot \nabla f^I(\lambda_i)\right)}{|f_{\lambda}(\lambda_i)|^4}.
\end{align}
From \eqref{31}, we also have the gradient terms of $\lambda^R_i$ and $\lambda^I_i$: 
\begin{align}\label{34}
&\nabla \lambda^R_i \cdot \nabla \lambda^R_i\nonumber\\
&=\frac{f^R_R(\lambda_i)^2\nabla f^R(\lambda_i) \cdot \nabla f^R(\lambda_i)+\im\bigl(f_{\lambda}(\lambda_i)^2\bigr)\nabla f^R(\lambda_i) \cdot \nabla f^I(\lambda_i)+f^I_R(\lambda_i)^2\nabla f^I(\lambda_i) \cdot \nabla f^I(\lambda_i)}{|f_{\lambda}(\lambda_i)|^4},\\
\label{35}
&\nabla \lambda^I_i \cdot \nabla \lambda^I_i\nonumber\\
&=\frac{f^I_R(\lambda_i)^2\nabla f^R(\lambda_i) \cdot \nabla f^R(\lambda_i)-\im\bigl(f_{\lambda}(\lambda_i)^2\bigr)\nabla f^R(\lambda_i) \cdot \nabla f^I(\lambda_i)+f^R_R(\lambda_i)^2f^I(\lambda_i) \cdot \nabla f^I(\lambda_i)}{|f_{\lambda}(\lambda_i)|^4},\\
\label{36}
&\nabla \lambda^R_i\cdot \nabla \lambda^I_i=\frac{\im\bigl(\overline{f_{\lambda}(\lambda_i)}^2\nabla f(\lambda_i) \cdot \nabla f(\lambda_i)\bigr)
}{2|f_{\lambda}(\lambda_i)|^4}.
\end{align}
To calculate the above quantities, we must know the derivatives of $f$ explicitly, and so we use Lemma \ref{lemA3} with $A=\lambda I-J$. We note that for $k<\ell$, each determinant in \eqref{A3} does not have $(k,k)$ and $(k,\ell)$ entries, and we obtain
\begin{equation}\label{derixkk}
f_{x_{kk}}=-\frac{\sqrt{1+\tau}}{\sqrt{2}}\det\bigl((\lambda I-J)_{k|k}\bigr),\ \ f_{x_{kk},x_{kk}}=0.
\end{equation}
For the off-diagonal entries of A, 
\begin{align*}
a_{k\ell}a_{\ell k}=\frac{(1+\tau)(x_{k\ell}^2+y_{k\ell}^2)-(1-\tau)(\alpha_{k\ell}^2+\beta_{k\ell}^2)+\sqrt{-1}\ 2\sqrt{1-\tau^2}(x_{k\ell}\alpha_{k\ell}+y_{k\ell}\beta_{k\ell})}{4}
\end{align*}
which gives
\begin{align}\label{derixkl}
f_{x_{k\ell}}&=-\frac{(1+\tau)2x_{k\ell}+\sqrt{-1}\ 2\sqrt{1-\tau^2}\alpha_{k\ell}}{4}\det(A_{k\ell | \ell k})
-\sum_{\substack{q\neq k,\ell\\q<\ell}}(-1)^{k+q-1}\frac{\sqrt{1+\tau}}{2}a_{\ell q}\det(A_{k\ell | \ell q})\nonumber\\
&-\frac{\sqrt{1+\tau}}{2}\sum_{\substack{q\neq k,\ell\\q>\ell}}(-1)^{k+q}a_{\ell q}\det(A_{k\ell | \ell q})
-\frac{\sqrt{1+\tau}}{2}\sum_{\substack{p\neq k,\ell\\p>k}}(-1)^{\ell+p-1}a_{kp}\det(A_{k\ell | pk})\nonumber\\
&-\frac{\sqrt{1+\tau}}{2}\sum_{\substack{p\neq k,\ell\\p<k}}(-1)^{\ell+p}a_{kp}\det(A_{k\ell | pk})\nonumber\\
&=-\frac{\sqrt{1+\tau}}{2}(-1)^{k+\ell}\Bigl(\det\bigl((\lambda I-J)_{k|\ell}\bigr)+\det\bigl((\lambda I-J)_{\ell |k}\bigr)\Bigr).
\end{align}
Expanding $\det\bigl((\lambda I-J)_{k|\ell}\bigr)$ and $\det\bigl((\lambda I-J)_{\ell |k}\bigr)$ by each the $\ell$-th and $k$-th row, we have
\begin{align*}
& \frac{\partial \det\bigl((\lambda I-J)_{k|\ell}\bigr)}{\partial x_{k\ell}}
 =\frac{\sqrt{1+\tau}}{2}(-1)^{k+\ell}\det\bigl((\lambda I-J)_{k\ell |k\ell}\bigr),\\
 & \frac{\partial \det\bigl((\lambda I-J)_{\ell |k}\bigr)}{\partial x_{k\ell}}
 =\frac{\sqrt{1+\tau}}{2}(-1)^{k+\ell}\det\bigl((\lambda I-J)_{k\ell | k\ell}\bigr),
\end{align*}
and we yield
\begin{align}\label{derixklxkl}
f_{x_{k\ell},x_{k\ell}}=-\frac{1+\tau}{2}\det\bigl((\lambda I-J)_{k\ell | k \ell}\bigr).
\end{align}
Similarly, we also have the other first and second derivatives of $f$:
\begin{align}
\begin{aligned}\label{derideri}
&f_{y_{k\ell}}=-\sqrt{-1}\frac{\sqrt{1+\tau}}{2}(-1)^{k+\ell}\Bigl(\det\bigl((\lambda I-J)_{k|\ell}\bigr)-\det\bigl((\lambda I-J)_{\ell |k}\bigr)\Bigr),\ \ 
f_{y_{k\ell},y_{k\ell}}=f_{x_{k\ell},x_{k\ell}},\\
&f_{\alpha_{kk}}=-\sqrt{-1}\frac{\sqrt{1-\tau}}{\sqrt{2}}\det\bigl((\lambda I-J)_{k|k}\bigr),\ f_{\alpha_{kk},\alpha_{kk}}=0,\\
&f_{\alpha_{k\ell}}=-\sqrt{-1}\frac{\sqrt{1-\tau}}{2}(-1)^{k+\ell}\Bigl(\det\bigl((\lambda I-J)_{k|\ell}\bigr)+\det\bigl((\lambda I-J)_{\ell |k}\bigr)\Bigr),\\
&f_{\alpha_{k\ell},\alpha_{k\ell}}=\frac{1-\tau}{2}\det\bigl((\lambda I-J)_{k\ell |k\ell}\bigr),\\
&f_{\beta_{k\ell}}=\frac{\sqrt{1-\tau}}{2}(-1)^{k+\ell}\Bigl(\det\bigl((\lambda I-J)_{k|\ell}\bigr)-\det\bigl((\lambda I-J)_{\ell |k}\bigr)\Bigr),\ \ 
f_{\beta_{k\ell},\beta_{k\ell}}=f_{\alpha_{k\ell},\alpha_{k\ell}}.
\end{aligned}
\end{align}
Using these derivatives and applying Lemma \ref{lemA4}, we get 
\begin{align}\label{39}
&\nabla f(\lambda) \cdot \nabla f(\lambda)=\tau\sum_{k,\ell =1}^N\det\bigl((\lambda I-J)_{k|\ell}\bigr)\det\bigl((\lambda I-J)_{\ell |k}\bigr)
=\tau\sum_{k=1}^N\det\bigl((\lambda I-J)^2_{k|k}\bigr),\\ \label{310}
&\nabla f(\lambda) \cdot \nabla \overline{f(\lambda)}=\sum_{k,\ell =1}^N\Bigl|\det\bigl((\lambda I-J)_{k|\ell}\bigr)\Bigr|^2.
\end{align}
Indeed, the summation of \eqref{39} has an useful expression.
\begin{lemma} \label{lem1}
For all $\lambda \in \C$,
\begin{align*}
\ \sum_{k=1}^N\det\bigl((\lambda I-J)^2_{k|k}\bigr)=f_\lambda(\lambda)^2-2\sum_{k<l}(\lambda-\lambda_k)(\lambda-\lambda_l)\prod_{m(\neq k,l)}(\lambda-\lambda_m)^2.
\end{align*}
In particular, if $\lambda_i$ is one of the eigenvalues of $J$, then
\begin{equation}\label{415}
\sum_{k=1}^N\det\bigl((\lambda_i I-J)^2_{k|k}\bigr)=f_\lambda(\lambda_i)^2.
\end{equation}
\end{lemma}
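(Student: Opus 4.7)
The plan is to apply the identity for sums of principal minors which I take to be the content of Lemma \ref{lemA2}: for any $N\times N$ matrix $M$ with eigenvalues $\mu_1,\dots,\mu_N$, one has $\sum_{k=1}^N\det(M_{k|k})=e_{N-1}(\mu_1,\dots,\mu_N)=\sum_{i=1}^N\prod_{j(\neq i)}\mu_j$. Since the eigenvalues of $(\lambda I-J)^2$ are precisely $(\lambda-\lambda_i)^2$ for $i=1,\dots,N$, this immediately yields
$$\sum_{k=1}^N\det\bigl((\lambda I-J)^2_{k|k}\bigr)=\sum_{i=1}^N\prod_{j(\neq i)}(\lambda-\lambda_j)^2.$$

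The next step is a direct algebraic manipulation of $f_\lambda$. From $f(\lambda)=\prod_{i=1}^N(\lambda-\lambda_i)$ we get $f_\lambda(\lambda)=\sum_{i=1}^N\prod_{j(\neq i)}(\lambda-\lambda_j)$. Squaring and separating the diagonal ($i=j$) contribution from the off-diagonal ($i\neq j$) one, then pairing $(i,j)$ with $(j,i)$ to produce a factor of $2$, I obtain
$$f_\lambda(\lambda)^2=\sum_{i=1}^N\prod_{j(\neq i)}(\lambda-\lambda_j)^2+2\sum_{i<j}(\lambda-\lambda_i)(\lambda-\lambda_j)\prod_{m(\neq i,j)}(\lambda-\lambda_m)^2,$$
using that for $i\neq j$ the product $\prod_{k(\neq i)}(\lambda-\lambda_k)\cdot\prod_{l(\neq j)}(\lambda-\lambda_l)$ equals $(\lambda-\lambda_i)(\lambda-\lambda_j)\prod_{m(\neq i,j)}(\lambda-\lambda_m)^2$, because both missing indices $i$ and $j$ reappear once with the single power, while every other index $m$ contributes a squared factor. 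Rearranging matches exactly the right-hand side of the first assertion.

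For \eqref{415}, I would substitute $\lambda=\lambda_i$ into the off-diagonal correction $2\sum_{k<l}(\lambda_i-\lambda_k)(\lambda_i-\lambda_l)\prod_{m(\neq k,l)}(\lambda_i-\lambda_m)^2$ and argue each summand vanishes. If $i\in\{k,l\}$, one of the leading factors $(\lambda_i-\lambda_k)$ or $(\lambda_i-\lambda_l)$ is zero; otherwise $i\notin\{k,l\}$, so $i$ appears as one of the indices in $\prod_{m(\neq k,l)}$, contributing a factor $(\lambda_i-\lambda_i)^2=0$. Hence the entire correction vanishes at $\lambda=\lambda_i$, leaving $\sum_k\det((\lambda_i I-J)^2_{k|k})=f_\lambda(\lambda_i)^2$.

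There is no serious obstacle: the whole argument is bookkeeping driven by the principal-minor identity for elementary symmetric polynomials. The only point requiring some care is the off-diagonal pairing in the expansion of $f_\lambda^2$, where one must verify that the two missing indices from the two products combine correctly to give the $(\lambda-\lambda_i)(\lambda-\lambda_j)$ factor together with a squared product over the remaining $N-2$ indices.
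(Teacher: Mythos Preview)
Your proof is correct and follows essentially the same approach as the paper: apply Lemma~\ref{lemA2} to $(\lambda I-J)^2$ to obtain $\sum_k\prod_{\ell(\neq k)}(\lambda-\lambda_\ell)^2$, then expand $f_\lambda(\lambda)^2=\bigl(\sum_k\prod_{\ell(\neq k)}(\lambda-\lambda_\ell)\bigr)^2$ into diagonal and off-diagonal parts. The paper is terser about the squaring step and the vanishing at $\lambda=\lambda_i$, but your more explicit bookkeeping is exactly what is meant.
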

\begin{proof}
$(\lambda I-J)^2$ has the eigenvalues $(\lambda-\lambda_k)^2,\ k=1,\cdots,N$, and by Lemma \ref{lemA2}, we obtain
\begin{align}\label{416}
\sum_{k=1}^N\det\bigl((\lambda I-J)^2_{k|k}\bigr)=\sum_{k=1}^N\prod_{\ell(\neq k)}(\lambda-\lambda_\ell)^2.
\end{align}
On the other hand, since $f_\lambda(\lambda)=\sum_{k=1}^N\prod_{\ell(\neq k)}(\lambda-\lambda_\ell)$, the claim holds.
\end{proof}

Differentiating \eqref{39} with respect to $\lambda$ and using \eqref{416}, we have
\begin{align*}
&\nabla f_{\lambda}(\lambda) \cdot \nabla f(\lambda)=\tau\sum_{k \neq \ell}(\lambda-\lambda_\ell)\prod_{m(\neq k,\ell)}(\lambda-\lambda_m)^2.
\end{align*} 
Because $f_{\lambda \lambda}(\lambda_i)=2\sum_{k (\neq i)}\prod_{l (\neq i,k)} (\lambda_i-\lambda_l)$, we take $\lambda=\lambda_i$ and obtain
\begin{align}\label{452}
&\nabla f_{\lambda}(\lambda_i) \cdot \nabla f(\lambda_i)=\tau\sum_{k ( \neq i)}(\lambda_i-\lambda_k)\prod_{l(\neq i,k)}(\lambda_i-\lambda_l)^2
=\frac{\tau}{2}f_{\lambda}(\lambda_i)f_{\lambda \lambda}(\lambda_i).
\end{align}
Next, we calculate the Laplacian of $f$. Using the second derivatives in \eqref{derixkk}, \eqref{derixklxkl} and \eqref{derideri},
\begin{align*}
\Delta f(\lambda)&=-\sum_{k<\ell}(1+\tau)\det\bigl((\lambda I-J)_{k\ell | k\ell}\bigr)+\sum_{k<\ell}(1-\tau)\det\bigl((\lambda I-J)_{k\ell|k\ell}\bigr)\\
&=-2\tau\sum_{k<\ell}\det\bigl((\lambda I-J)_{k\ell |k\ell}\bigr).
\end{align*}
We apply Lemma \ref{lemA2} and obtain $\sum_{k<\ell}\det\bigl((\lambda I-J)_{k\ell|k\ell}\bigr)=\sum_{k \neq \ell}\prod_{m (\neq k,\ell)}(\lambda-\lambda_m)$. Taking $\lambda=\lambda_i$, we find a simple form:
\begin{align}\label{420}
\Delta f(\lambda_i)=-2\tau\sum_{k ( \neq i)}\prod_{l(\neq i,k)}(\lambda_i-\lambda_l)=-\tau f_{\lambda \lambda}(\lambda_i).
\end{align}

Using all the above equations, we prove Lemma \ref{Lap}.
\begin{proof}[Proof of Lemma \ref{Lap}]
Firstly, we show \eqref{422}. From \eqref{39} and \eqref{415}, the numerator of  \eqref{36} vanishes, and we get
\begin{align}\label{421}
\nabla \lambda^R_i \cdot \nabla \lambda^I_i=0.
\end{align}

For \eqref{213}, by \eqref{452}, \eqref{420} and \eqref{421}, we have 
\begin{align*}
\Delta \lambda_i^R=&
-\frac{-\tau\re\bigl(\overline{f_{\lambda}(\lambda_i)}f_{\lambda\lambda}(\lambda_i)\bigr)
+2\nabla \lambda_i^R \cdot \nabla \lambda_i^R\re\bigl(\overline{f_\lambda(\lambda_i)}f_{\lambda\lambda}(\lambda_i)\bigr)}{|f_{\lambda}(\lambda_i)|^2}\nonumber\\
&+\dfrac{\tau\re\bigl(\overline{f_{\lambda}(\lambda_i)}^2f_{\lambda}(\lambda_i)f_{\lambda \lambda}(\lambda_i)\bigr)
+\re\bigl(\overline{f_\lambda(\lambda_i)}f_{\lambda\lambda}(\lambda_i)\bigr)\left(\nabla f^R(\lambda_i) \cdot \nabla f^R(\lambda_i)+\nabla f^I(\lambda_i) \cdot \nabla f^I(\lambda_i)\right)}{|f_{\lambda}(\lambda_i)|^4}\\
&=\frac{\re\bigl(\overline{f_{\lambda}(\lambda_i)}f_{\lambda\lambda}(\lambda_i)\bigr)}{|f_{\lambda}(\lambda_i)|^2}
\left(2\tau-2\nabla \lambda_i^R \cdot \nabla \lambda_i^R+\frac{\nabla f^R(\lambda_i) \cdot \nabla f^R(\lambda_i)+\nabla f^I(\lambda_i) \cdot \nabla f^I(\lambda_i)}{|f_{\lambda}(\lambda_i)|^2}\right).
\end{align*}
To calculate the last term, we rewrite \eqref{39} 
by using $f^R(\lambda)$ and $f^I(\lambda)$ as
\begin{align*}
&\nabla f(\lambda) \cdot \nabla f(\lambda)=\nabla f^R(\lambda) \cdot \nabla f^R(\lambda)-\nabla f^I(\lambda) \cdot \nabla f^I(\lambda)+2\sqrt{-1}\nabla f^R(\lambda) \cdot \nabla f^I(\lambda).
\end{align*}
Applying \eqref{39} and \eqref{415}, we have
\begin{align}\label{453}
\begin{aligned}
&\nabla f^R(\lambda_i) \cdot \nabla f^R(\lambda_i)-\nabla f^I(\lambda_i) \cdot \nabla f^I(\lambda_i)=\tau\re( f_\lambda(\lambda_i)^2),\\
&\nabla f^R(\lambda_i) \cdot \nabla f^I(\lambda_i)=\frac{\tau}{2}\im( f_\lambda(\lambda_i)^2)
\end{aligned}
\end{align}
which gives
\begin{align}\label{454}
&-2\nabla \lambda_i^R \cdot \nabla \lambda_i^R
 +\dfrac{\nabla f^R(\lambda_i) \cdot \nabla f^R(\lambda_i)+\nabla f^I(\lambda_i) \cdot \nabla f^I(\lambda_i)}{|f_\lambda(\lambda_i)|^2}
=-\tau.
\end{align}
Here, we use \eqref{34}. Hence we obtain the former equation of \eqref{422}. Similarly, we calculate \eqref{214} and get
\begin{align*}
\Delta \lambda_i^I=&
 -\frac{-\tau\im\bigl(\overline{f_{\lambda}(\lambda_i)}f_{\lambda\lambda}(\lambda_i)\bigr)
-2\nabla \lambda_i^I \cdot \nabla \lambda_i^I\im\bigl(\overline{f_\lambda(\lambda_i)}f_{\lambda\lambda}(\lambda_i)\bigr)}{|f_{\lambda}(\lambda_i)|^2}\nonumber\\
&+\dfrac{\tau\im\bigl(\overline{f_{\lambda}(\lambda_i)}^2f_{\lambda}(\lambda_i)f_{\lambda \lambda}(\lambda_i)\bigr)
-\im\bigl(\overline{f_\lambda(\lambda_i)}f_{\lambda\lambda}(\lambda_i)\bigr)\left(\nabla f^R(\lambda_i) \cdot \nabla f^R(\lambda_i)+\nabla f^I(\lambda_i) \cdot \nabla f^I(\lambda_i)\right)}{|f_{\lambda}(\lambda_i)|^4}\\
=&\frac{\im\bigl(\overline{f_{\lambda}(\lambda_i)}f_{\lambda\lambda}(\lambda_i)\bigr)}{|f_{\lambda}(\lambda_i)|^2}
\left(2\tau+2\nabla \lambda_i^I \cdot \nabla \lambda_i^I-\frac{\nabla f^R(\lambda_i) \cdot \nabla f^R(\lambda_i)+\nabla f^I(\lambda_i) \cdot \nabla f^I(\lambda_i)}{|f_{\lambda}(\lambda_i)|^2}\right).
\end{align*}

From \eqref{35} and \eqref{453}, we also yield
\begin{align}\label{455}
2\nabla \lambda_i^I \cdot \nabla \lambda_i^I-\frac{\nabla f^R(\lambda_i) \cdot \nabla f^R(\lambda_i)+\nabla f^I(\lambda_i) \cdot \nabla f^I(\lambda_i)}{|f_{\lambda}(\lambda_i)|^2}=-\tau.
\end{align}
Hence we obtain the latter equation of \eqref{422}, and we finish the calculations of $\Delta \lambda_i^R$ and $\Delta \lambda_i^I$. Secondly, we show \eqref{nabii}. 
Note that 
\begin{align*}
&\nabla f(\lambda) \cdot \nabla \overline{f(\lambda)}=\nabla f^R(\lambda) \cdot \nabla f^R(\lambda)+\nabla f^I(\lambda) \cdot \nabla f^I(\lambda)
\end{align*}
and by \eqref{310}, \eqref{454}, \eqref{455} and \eqref{421}, we yield \eqref{nabii}. 
Finally, we show \eqref{nabij}. From \eqref{31}, for $i \neq j$, we have 
\begin{align*}
\nabla \lambda_i^R \cdot \nabla \lambda_j^R
=&\frac{f^R_R(\lambda_i)f^R_R(\lambda_j)\nabla f^R(\lambda_i) \cdot \nabla f^R(\lambda_j)
+f^R_R(\lambda_i)f^I_R(\lambda_j)\nabla f^R(\lambda_i) \cdot \nabla f^I(\lambda_j)}{|f_\lambda(\lambda_i)|^2|f_\lambda(\lambda_j)|^2}\nonumber\\
&+\frac{f^I_R(\lambda_i)f^R_R(\lambda_j)\nabla f^I(\lambda_i) \cdot \nabla f^R(\lambda_j)
+f^I_R(\lambda_i)f^I_R(\lambda_j)\nabla f^I(\lambda_i) \cdot \nabla f^I(\lambda_j)}{|f_\lambda(\lambda_i)|^2|f_\lambda(\lambda_j)|^2},
\end{align*}
and so we want the gradient terms. By straight computation, we get
\begin{align*}
&\nabla f^R(\lambda_i) \cdot \nabla f^R(\lambda_j)\\
&=\frac{1}{2}\re\left(\sum_{k=1}^N\det\Bigl(((\lambda_i I-J)(\lambda_j I-J)^*)_{k|k}\Bigr)\right)
+\frac{\tau}{2}\re\left(\sum_{k=1}^N\det\Bigl(((\lambda_i I-J)(\lambda_j I-J))_{k|k}\Bigr)\right).
\end{align*}
Here, we use $\overline{\det\bigl((\lambda_j I-J)_{i|j}\bigr)}=\det\bigl((\lambda_j I-J)^*_{j|i}\bigr)$ and Lemma \ref{lemA4} in the last equation. Since $(\lambda_i I-J)(\lambda_j I-J)$ must have two zero eigenvalues, the second summation vanishes by applying Lemma \ref{lemA2}. Therefore, we obtain
\begin{align}\label{430}
\nabla f^R(\lambda_i) \cdot \nabla f^R(\lambda_j)=\frac{1}{2}\re\left(\sum_{k=1}^N\det\Bigl(((\lambda_i I-J)(\lambda_j I-J)^*)_{k|k}\Bigr)\right).
\end{align}
Similarly, we also obtain
\begin{align}\label{431}
\nabla f^I(\lambda_i) \cdot \nabla f^I(\lambda_j)&=\nabla f^R(\lambda_i) \cdot \nabla f^R(\lambda_j),\\
\label{432}
-\nabla f^R(\lambda_i) \cdot \nabla f^I(\lambda_j)&=\nabla f^I(\lambda_i) \cdot \nabla f^R(\lambda_j)
=\frac{1}{2}\im\left(\sum_{k=1}^N\det\Bigl(((\lambda_i I-J)(\lambda_j I-J)^*)_{k|k}\Bigr)\right).
\end{align}
By \eqref{430}-\eqref{432}, we conclude
\begin{align*}
\nabla \lambda_i^R \cdot \nabla \lambda_j^R
&=\frac{1}{2}\re\left(\frac{\sum_{k=1}^N\det\Bigl(((\lambda_i I-J)(\lambda_j I-J)^*)_{k|k}\Bigr)}{f_\lambda(\lambda_i)\overline{f_\lambda(\lambda_j)}}\right).
\end{align*}
The others are also obtained by the same calculation:
\begin{align*}
\nabla \lambda_i^I \cdot \nabla \lambda_j^I&=\nabla \lambda_i^R \cdot \nabla \lambda_j^R,\\
-\nabla \lambda_i^R \cdot \nabla \lambda_j^I&=\nabla \lambda_i^I \cdot \nabla \lambda_j^R
=\frac{1}{2}\im\left(\frac{\sum_{k=1}^N\det\Bigl(((\lambda_i I-J)(\lambda_j I-J)^*)_{k|k}\Bigr)}{f_\lambda(\lambda_i)\overline{f_\lambda(\lambda_j)}}\right).
\end{align*}
Hence we also obtain \eqref{nabij}, and we finish the proof of Lemma \ref{Lap}.
\end{proof}


\subsection{Proof of Corollary \ref{cor2}}

From \eqref{435.5}, \eqref{436} and \eqref{435}, we apply Ito's formula to $\Ov_{11}$ and obtain 
\begin{align}
&d\bigl(||J||^2_2-\lambda_1\overline{\lambda_2}-\overline{\lambda_1}\lambda_2\bigr)(t)
=\sum_{i,j=1}^2(J_{ij}(t)d\overline{J_{ij}}(t)+\overline{J_{ij}}(t)dJ_{ij}(t))+2(\Ov_{11}(t)+1)dt\nonumber\\
&\ \ \ \ \ \ \ \ \ \ \ \ \ \ \ \ \ \ \ \ \ \ \ \ \ \ \ \ \ \ \ \ \ \ \ \ \ \ -\lambda_1(t)d\overline{\lambda_2}(t)-\lambda_2(t)d\overline{\lambda_1}(t)-\overline{\lambda_1}(t)d\lambda_2(t)-\overline{\lambda_2}(t)d\lambda_1(t),\nonumber\\
&d|\lambda_1-\lambda_2|^2(t)=(\lambda_1(t)-\lambda_2(t))d(\overline{\lambda_1}(t)-\overline{\lambda_2}(t))+(\overline{\lambda_1}(t)-\overline{\lambda_2}(t))d(\lambda_1(t)-\lambda_2(t))+2(2\Ov_{11}(t)-1)dt\nonumber,
\end{align}
\begin{eqnarray}
\begin{aligned}\label{437}
d\Ov_{11}(t)&=\frac{d\bigl(||J||^2_2-\lambda_1\overline{\lambda_2}-\overline{\lambda_1}\lambda_2\bigr)(t)-\Ov_{11}(t)d|\lambda_1-\lambda_2|^2(t)}{|\lambda_1(t)-\lambda_2(t)|^2}\\\
&+\frac{\Ov_{11}(t)d\langle |\lambda_1-\lambda_2|^2\rangle_t-d\langle ||J||^2_2-\lambda_1\overline{\lambda_2}-\overline{\lambda_1}\lambda_2,|\lambda_1-\lambda_2|^2\rangle_t}{|\lambda_1(t)-\lambda_2(t)|^4}.
\end{aligned}
\end{eqnarray}
We first deal with the local martingale part of \eqref{437}. From the above two stochastic differential equations and \eqref{42}, we have 
\begin{align}\label{438}
&({\rm local\ martingale\ term\ of}\ d\Ov_{11})\nonumber\\
&=\frac{2}{|\lambda_1(t)-\lambda_2(t)|^2}
\re\biggl(\sum_{i,j=1}^2J_{ij}(t)d\overline{J_{ij}}(t)-\Ov_{11}(t)\frac{\overline{\lambda_1}(t)-\overline{\lambda_2}(t)}{\lambda_1(t)-\lambda_2(t)}\bigl(2J_{21}(t)dJ_{12}(t)+2J_{12}(t)dJ_{21}(t)\bigr)\nonumber\\
&-\Ov_{11}(t)\frac{\overline{\lambda_1}(t)-\overline{\lambda_2}(t)}{\lambda_1(t)-\lambda_2(t)}
\bigl((\lambda_1(t)+\lambda_2(t)-2J_{22}(t))dJ_{11}(t)+(\lambda_1(t)+\lambda_2(t)-2J_{11}(t))dJ_{22}(t)\bigr)\nonumber\\
&+\frac{\overline{\lambda_1}(t)}{\lambda_1(t)-\lambda_2(t)}\bigl((\lambda_2(t)-J_{22}(t))dJ_{11}(t)+J_{21}(t)dJ_{12}(t)+J_{12}(t)dJ_{21}(t)+(\lambda_2(t)-J_{11}(t))dJ_{22}(t)\bigr)\nonumber\\
&-\frac{\overline{\lambda_2}(t)}{\lambda_1(t)-\lambda_2(t)}\bigl((\lambda_1(t)-J_{22}(t))dJ_{11}(t)+J_{21}(t)dJ_{12}(t)+J_{12}(t)dJ_{21}(t)+(\lambda_1(t)-J_{11}(t))dJ_{22}(t)\bigr)
\biggr)\nonumber\\
&=\frac{2}{|\lambda_1(t)-\lambda_2(t)|^2}\re\biggl(\sum_{i,j=1}^2J_{ij}(t)d\overline{J_{ij}}(t)\nonumber\\
&+(2\Ov_{11}(t)-1)\frac{\overline{\lambda_1}(t)-\overline{\lambda_2}(t)}{\lambda_1(t)-\lambda_2(t)}\bigl(J_{22}(t)dJ_{11}(t)+J_{11}(t)dJ_{22}(t)-J_{21}(t)dJ_{12}(t)-J_{12}(t)dJ_{21}(t)\bigr)\nonumber\\
&+\frac{\overline{\lambda_1}(t)\lambda_2(t)-\lambda_1(t)\overline{\lambda_2}(t)-\Ov_{11}(t)(\lambda_1(t)+\lambda_2(t))(\overline{\lambda_1}(t)-\overline{\lambda_2}(t))}{\lambda_1(t)-\lambda_2(t)}(dJ_{11}(t)+dJ_{22}(t))
\biggr).
\end{align}
The above equation gives the explicit form of $M_{11}(t)$ in the Corollary \ref{cor2}. Next, we calculate the drift term of \eqref{437}. From \eqref{436}, \eqref{435}, \eqref{corr} and the elementary relations 
$$\tr(J(t))=\lambda_1(t)+\lambda_2(t),\det(J(t))=\lambda_1(t)\lambda_2(t),$$
 we obtain the real quadratic variations: 
\begin{align}\label{439}
d\langle |\lambda_1-\lambda_2|^2\rangle_t
&=\Bigl(4(2\Ov_{11}(t)-1)|\lambda_1(t)-\lambda_2(t)|^2+2\tau\bigl((\lambda_1(t)-\lambda_2(t))^2+(\overline{\lambda_1}(t)-\overline{\lambda_2}(t))^2\bigr)\Bigr)dt,\\
d\langle ||J||^2_2,\lambda_1\rangle_t
&=\frac{1}{\lambda_1(t)-\lambda_2(t)}\nonumber\\
&\times\biggl(
(\lambda_1(t)-J_{22}(t))J_{11}(t)+J_{21}(t)J_{12}(t)
+J_{12}(t)J_{21}(t)+(\lambda_1(t)-J_{11}(t))J_{22}(t)\nonumber\\
&+\tau(\lambda_1(t)-J_{22}(t))\overline{J_{11}}(t)+\tau |J_{21}(t)|^2
+\tau |J_{12}(t)|^2+\tau(\lambda_1(t)-J_{11}(t))\overline{J_{22}}(t)
\biggr)dt\nonumber\\
&=\lambda_1(t)dt+\tau\frac{||J(t)||^2_2-\lambda_2(t)(\overline{\lambda_1}(t)+\overline{\lambda_2}(t))}{\lambda_1(t)-\lambda_2(t)}dt,\nonumber\\
d\langle ||J||^2_2,\lambda_2\rangle_t&=\lambda_2(t)dt+\tau\frac{||J(t)||^2_2-\lambda_1(t)(\overline{\lambda_1}(t)+\overline{\lambda_2}(t))}{\lambda_2(t)-\lambda_1(t)}dt.\nonumber
\end{align}
By using the equation 
$$2||J(t)||^2_2-|\lambda_1(t)+\lambda_2(t)|^2+|\lambda_1(t)-\lambda_2(t)|^2=2\Ov_{11}(t)|\lambda_1(t)-\lambda_2(t)|^2,$$
 we get
\begin{align}\label{440}
&d\langle ||J||^2_2-\lambda_1\overline{\lambda_2}-\overline{\lambda_1}\lambda_2,|\lambda_1-\lambda_2|^2\rangle_t\nonumber\\
&=d\langle ||J||^2_2,|\lambda_1-\lambda_2|^2\rangle_t-d\langle \lambda_1\overline{\lambda_2},|\lambda_1-\lambda_2|^2\rangle_t-d\langle \overline{\lambda_1}\lambda_2,|\lambda_1-\lambda_2|^2\rangle_t\nonumber\\
&=2\re\Biggl\{(\overline{\lambda_1}(t)-\overline{\lambda_2}(t))\left(\lambda_1(t)-\lambda_2(t)+\tau\frac{2||J(t)||^2_2-|\lambda_1(t)+\lambda_2(t)|^2)}{\lambda_1(t)-\lambda_2(t)}\right)
+\tau(\lambda_1(t)-\lambda_2(t))\lambda_1(t)\nonumber\\
&\ \ \ \ \ \ \ \ \ \ \ +(2\Ov_{11}(t)-1)(|\lambda_1(t)|^2-2\lambda_1(t)\overline{\lambda_2}(t)+|\lambda_2(t)|^2)
-\tau(\overline{\lambda_1}(t)-\overline{\lambda_2}(t))\overline{\lambda_2}(t)\Biggr\}dt\nonumber\\
&=4\Ov_{11}(t)|\lambda_1(t)-\lambda_2(t)|^2dt+2\tau\Ov_{11}(t)((\lambda_1(t)-\lambda_2(t))^2+(\overline{\lambda_1}(t)-\overline{\lambda_2}(t))^2)dt.
\end{align}
From \eqref{42}, \eqref{437}, \eqref{439} and \eqref{440}, we obtain 
\begin{align*}
&({\rm drift\ term\ of}\ d\Ov_{11})\\
&=\frac{1}{|\lambda_1(t)-\lambda_2(t)|^2}\left(2(\Ov_{11}(t)+1)+2\tau\re\biggl(\frac{\overline{\lambda_1}(t)-\overline{\lambda_2}(t)}{\lambda_1(t)-\lambda_2(t)}\biggr)\right)dt\\
&-\frac{\Ov_{11}(t)}{|\lambda_1(t)-\lambda_2(t)|^2}\left(2(2\Ov_{11}(t)-1)+4\tau\re\biggl(\frac{\overline{\lambda_1}(t)-\overline{\lambda_2}(t)}{\lambda_1(t)-\lambda_2(t)}\biggr)\right)dt\\
&+\frac{\Ov_{11}(t)}{{|\lambda_1(t)-\lambda_2(t)|^4}}\Bigl(4(2\Ov_{11}(t)-1)|\lambda_1(t)-\lambda_2(t)|^2+2\tau((\lambda_1(t)-\lambda_2(t))^2+(\overline{\lambda_1}(t)-\overline{\lambda_2}(t))^2)\Bigr)dt\\
&-\frac{1}{{|\lambda_1(t)-\lambda_2(t)|^4}}\biggl(4\Ov_{11}(t)|\lambda_1(t)-\lambda_2(t)|^2+2\tau\Ov_{11}(t)((\lambda_1(t)-\lambda_2(t))^2+(\overline{\lambda_1}(t)-\overline{\lambda_2}(t))^2)\biggr)dt\\
&=\frac{1}{|\lambda_1(t)-\lambda_2(t)|^2}\left((2\Ov_{11}(t)-1)^2+1-2\tau(2\Ov_{11}(t)-1)\re\biggl(\frac{\overline{\lambda_1}(t)-\overline{\lambda_2}(t)}{\lambda_1(t)-\lambda_2(t)}\biggr)\right)dt
\end{align*}
which gives the drift term of Corollary \ref{cor2}. Thirdly, we calculate the real quadratic variation $\langle \Ov_{11}\rangle_t$. We define 
\begin{align*}
&A:=(2\Ov_{11}(t)-1)\frac{\overline{\lambda_1}(t)-\overline{\lambda_2}(t)}{\lambda_1(t)-\lambda_2(t)},\\
&B:=\frac{\overline{\lambda_1}(t)\lambda_2(t)-\lambda_1(t)\overline{\lambda_2}(t)-\Ov_{11}(t)(\lambda_1(t)+\lambda_2(t))(\overline{\lambda_1}(t)-\overline{\lambda_2}(t))}{\lambda_1(t)-\lambda_2(t)}
\end{align*}
and rewrite  \eqref{438} as 
\begin{align*}
&({\rm local\ martingale\ term\ of}\ d\Ov_{11})\\
&=\frac{1}{|\lambda_1(t)-\lambda_2(t)|^2}\\
&\times\biggl(
(\overline{J_{11}}(t)+AJ_{22}(t)+B)dJ_{11}(t)+(\overline{J_{22}}(t)+AJ_{11}(t)+B)dJ_{22}(t)
+(\overline{J_{12}}(t)-AJ_{21}(t))dJ_{12}(t)\\
&\ \ \ +(\overline{J_{21}}(t)-AJ_{12}(t))dJ_{21}(t)
+(J_{11}(t)+\overline{A}\overline{J_{22}}(t)+\overline{B})d\overline{J_{11}}(t)+(J_{22}(t)+\overline{A}\overline{J_{11}}(t)+\overline{B})d\overline{J_{22}}(t)\\
&\ \ \ +(J_{12}(t)-\overline{A}\overline{J_{21}}(t))d\overline{J_{12}}(t)+(J_{21}(t)-\overline{A}\overline{J_{12}}(t))d\overline{J_{21}}(t)
\biggr).
\end{align*}
By using \eqref{corr} and the above notations,
\begin{align*}
&d\langle \Ov_{11}\rangle_t
=\frac{1}{|\lambda_1(t)-\lambda_2(t)|^4}\\
&\times\biggl(
\tau(\overline{J_{11}}(t)+AJ_{22}(t)+B)^2
+\tau(\overline{J_{22}}(t)+AJ_{11}(t)+B)^2
+\tau(J_{11}(t)+\overline{A}\overline{J_{22}}(t)+\overline{B})^2\\
&\ \ \ \ \ \ +\tau(J_{22}(t)+\overline{A}\overline{J_{11}}(t)+\overline{B})^2
+2|\overline{J_{11}}(t)+AJ_{22}(t)+B|^2
+2|\overline{J_{22}}(t)+AJ_{11}(t)+B|^2\\
&\ \ \ \ \ \ +2|\overline{J_{12}}(t)-AJ_{21}(t)|^2
+2|\overline{J_{21}}(t)-AJ_{12}(t)|^2
+2\tau(\overline{J_{12}}(t)-AJ_{21}(t))(\overline{J_{21}}(t)-AJ_{12}(t))\\
&\ \ \ \ \ \ +2\tau(J_{12}(t)-\overline{A}\overline{J_{21}}(t))(J_{21}(t)-\overline{A}\overline{J_{12}}(t))
\biggr)dt\\
&=\frac{2}{|\lambda_1(t)-\lambda_2(t)|^4}
\biggl(||J(t)||^2_2(|A|^2+1)+2|B|^2+2A\lambda_1(t)\lambda_2(t)+2\overline{A}\overline{\lambda_1}(t)\overline{\lambda_2}(t)\\
&+B\Bigl(\lambda_1(t)+\lambda_2(t)+\overline{A}(\overline{\lambda_1}(t)+\overline{\lambda_2}(t))\Bigr)
+\overline{B}\Bigl(\overline{\lambda_1}(t)+\overline{\lambda_2}(t)+A(\lambda_1(t)+\lambda_2(t))\Bigr)
\biggr)dt\\
&+\frac{2\tau}{|\lambda_1(t)-\lambda_2(t)|^4}\\
&\times\re\biggl((A^2+1)(J_{11}(t)^2+J_{22}(t)^2+2J_{12}(t)J_{21}(t))+2B\Bigl(\overline{\lambda_1}(t)+\overline{\lambda_2}(t)+A(\lambda_1(t)+\lambda_2(t))\Bigr)\\
&+2A(J_{11}(t)\overline{J_{22}}(t)+\overline{J_{11}}(t)J_{22}(t)-|J_{12}(t)|^2-|J_{21}(t)|^2)+2B^2
\biggr)dt.
\end{align*}
Because 
\begin{align*}
&-2B=\overline{\lambda_1}(t)+\overline{\lambda_2}(t)+A(\lambda_1(t)+\lambda_2(t)),\ \  J_{11}(t)^2+J_{22}(t)^2+2J_{12}(t)J_{21}(t)=\lambda_1(t)^2+\lambda_2(t)^2,\\
&J_{11}(t)\overline{J_{22}}(t)+\overline{J_{11}}(t)J_{22}(t)-|J_{12}(t)|^2-|J_{21}(t)|^2=|\lambda_1(t)+\lambda_2(t)|^2-||J(t)||^2_2,
\end{align*}
we obtain
\begin{align*}
\{ {\rm first\ term\ of}\ d\langle \Ov_{11}\rangle_t\}
&=\frac{4\Ov_{11}(t)(2\Ov_{11}(t)-1)(\Ov_{11}(t)-1)}{|\lambda_1(t)-\lambda_2(t)|^2}dt,\\
\{ {\rm second\ term\ of}\ d\langle \Ov_{11}\rangle_t\}
&=-\frac{4\tau \Ov_{11}(t)(\Ov_{11}(t)-1)\re\bigl((\lambda_1(t)-\lambda_2(t))^2\bigr)}{|\lambda_1(t)-\lambda_2(t)|^4}dt.
\end{align*}
Combining both, we obtain $d\langle \Ov_{11}\rangle_t$. Finally, from \eqref{439} and \eqref{440}, we yield
\begin{align*}
&d\langle \Ov_{11},|\lambda_1-\lambda_2|^2\rangle_t=\frac{d\langle ||J||^2_2-\lambda_1\overline{\lambda_2}-\overline{\lambda_1}\lambda_2,|\lambda_1-\lambda_2|^2\rangle_t-\Ov_{11}(t)d\langle |\lambda_1-\lambda_2|^2\rangle_t}{|\lambda_1(t)-\lambda_2(t)|^2}\\
&=\frac{1}{|\lambda_1(t)-\lambda_2(t)|^2}\biggl(4\Ov_{11}(t)|\lambda_1(t)-\lambda_2(t)|^2+2\tau\Ov_{11}(t)\bigl((\lambda_1(t)-\lambda_2(t))^2+(\overline{\lambda_1}(t)-\overline{\lambda_2}(t))^2\bigr)\\
&\ \ \ \ \ \ -\Ov_{11}(t)\Bigl(4(2\Ov_{11}(t)-1)|\lambda_1(t)-\lambda_2(t)|^2+2\tau\bigl((\lambda_1(t)-\lambda_2(t))^2+(\overline{\lambda_1}(t)-\overline{\lambda_2}(t))^2\bigr)\Bigr)
\biggr)dt\\
&=-8\Ov_{11}(t)(\Ov_{11}(t)-1)dt.
\end{align*}
Therefore, we finish the proof of Corollary \ref{cor2}.


\appendix
\section{Appendix: Tools and basic properties}
In this appendix, we record some elementary properties regarding eigenvalues and determinants. We also review complex Ito's formula.
\begin{lemma}[Drift terms of Dyson's model and characteristic polynomial]\label{lemA1}\ \\
Assume that $N\times N$ matrix $A$ has simple spectrum $\lambda_1,\cdots,\lambda_N$, and the characteristic polynomial is $f(\lambda):=\det(\lambda I_N-A)$. Then for any $i=1,\cdots,N$,
\begin{align}\label{A1}
\frac{f_{\lambda \lambda}(\lambda_i)}{f_\lambda(\lambda_i)}=2\sum_{j(\neq i)}\frac{1}{\lambda_i-\lambda_j}.
\end{align}
\end{lemma}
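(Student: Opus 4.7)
The plan is to factor the characteristic polynomial about the eigenvalue of interest and reduce the identity to a logarithmic derivative computation. Since $A$ has simple spectrum, we may write $f(\lambda) = \prod_{k=1}^N (\lambda - \lambda_k)$, and for each fixed $i$ we isolate the factor containing $\lambda_i$ by setting $g(\lambda) := \prod_{j \neq i}(\lambda - \lambda_j)$, so that $f(\lambda) = (\lambda - \lambda_i) g(\lambda)$.

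Next, I would apply the product rule twice. A single differentiation gives $f_\lambda(\lambda) = g(\lambda) + (\lambda - \lambda_i) g_\lambda(\lambda)$, and a second yields $f_{\lambda\lambda}(\lambda) = 2 g_\lambda(\lambda) + (\lambda - \lambda_i) g_{\lambda\lambda}(\lambda)$. Evaluating at $\lambda = \lambda_i$ kills the terms carrying the factor $(\lambda - \lambda_i)$ and leaves $f_\lambda(\lambda_i) = g(\lambda_i) = \prod_{j \neq i}(\lambda_i - \lambda_j)$ and $f_{\lambda\lambda}(\lambda_i) = 2 g_\lambda(\lambda_i)$. The simplicity of the spectrum guarantees $g(\lambda_i) \neq 0$, so the quotient on the left-hand side of \eqref{A1} is well-defined and equal to $2 g_\lambda(\lambda_i) / g(\lambda_i)$.

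Finally, I would recognize this quotient as a logarithmic derivative of a product of $N-1$ linear factors: since $g(\lambda) = \prod_{j \neq i}(\lambda - \lambda_j)$, one has $g_\lambda(\lambda)/g(\lambda) = \sum_{j \neq i} 1/(\lambda - \lambda_j)$ at any point where $g$ does not vanish, and in particular at $\lambda = \lambda_i$ since the $\lambda_j$ for $j \neq i$ are distinct from $\lambda_i$. Substituting gives the claimed identity.

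There is no real obstacle here; the only point worth flagging is that the argument relies essentially on the simple-spectrum hypothesis both to ensure $f_\lambda(\lambda_i) \neq 0$ (so that the left-hand side makes sense) and to ensure the individual summands $1/(\lambda_i - \lambda_j)$ are finite.
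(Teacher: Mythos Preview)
Your proof is correct and follows essentially the same approach as the paper: factor $f(\lambda)=(\lambda-\lambda_i)\prod_{j\neq i}(\lambda-\lambda_j)$, differentiate twice, evaluate at $\lambda_i$, and identify the resulting ratio with the sum $\sum_{j\neq i}1/(\lambda_i-\lambda_j)$. The only cosmetic difference is that you package the last step as a logarithmic derivative of $g$, whereas the paper expands $\sum_{j\neq i}1/(\lambda_i-\lambda_j)$ over a common denominator and compares directly.
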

\begin{proof}
By definition of eigenvalues, 
$f(\lambda)=\prod_{j=1}^N(\lambda-\lambda_j)=(\lambda-\lambda_i)\prod_{j (\neq i)}(\lambda-\lambda_j)$. Differentiating this with respect to $\lambda$, we have
\begin{align*}
f_\lambda(\lambda)&=\prod_{j (\neq i)}(\lambda-\lambda_j)+(\lambda-\lambda_i)\sum_{j (\neq i)}\prod_{k (\neq i,j)}(\lambda-\lambda_k),\\
f_{\lambda \lambda}(\lambda)&=\sum_{j (\neq i)}\prod_{k (\neq i,j)}(\lambda-\lambda_k)+\sum_{j (\neq i)}\prod_{k (\neq i,j)}(\lambda-\lambda_k)+(\lambda-\lambda_i)\sum_{j (\neq i)}\sum_{k (\neq i,j)}\prod_{\ell (\neq i,j,k)}(\lambda-\lambda_\ell).
\end{align*}
Substituting $\lambda=\lambda_i$,
\begin{align*}
f_\lambda(\lambda_i)=\prod_{j (\neq i)}(\lambda_i-\lambda_j),\ 
f_{\lambda \lambda}(\lambda_i)=2\sum_{j (\neq i)}\prod_{k (\neq i,j)}(\lambda_i-\lambda_k).
\end{align*}
On the other hand, the right hand side of (A.1) is 
\begin{align*}
\sum_{j(\neq i)}\frac{1}{\lambda_i-\lambda_j}=\frac{\sum_{j (\neq i)}\prod_{k (\neq i,j)}(\lambda_i-\lambda_k)}{\prod_{j (\neq i)}(\lambda_i-\lambda_j)},
\end{align*}
and the claim holds.
\end{proof}

\begin{lemma}[Minor determinants and eigenvalues]\label{lemA2}\ \\
Assume that $N \times N$ matrix $A$ has the eigenvalues $\lambda_1,\cdots,\lambda_N$. Then for all $k=1,\cdots,N$, 
\begin{align}\label{A2}
\sum_{1\le j_1<\cdots<j_k\le N}\ \prod_{\ell=1}^k\lambda_{j_\ell}=\sum_{1\le j_1<\cdots<j_k\le N}\underset{1\le \ell,m\le k}{{\rm det}}(A_{j_\ell j_m})
\end{align}
where $\underset{1\le \ell,m\le k}{{\rm det}}(A_{j_\ell j_m})$ is the $k$-th principal minor indexed by $\{ j_1<\cdots<j_k\} \subset \{1,\cdots,N\}$: 
\begin{align*}
\underset{1\le \ell,m\le k}{{\rm det}}(A_{j_\ell j_m})=\det
\begin{pmatrix}
a_{j_1j_1} & a_{j_1j_2} & \cdots & a_{j_1j_k}\\
a_{j_2j_1} & a_{j_2j_2} & \cdots & a_{j_2j_k}\\
\vdots & & \ddots & \vdots\\
a_{j_kj_1} & a_{j_kj_2} & \cdots & a_{j_kj_k}
\end{pmatrix}.
\end{align*}
\end{lemma}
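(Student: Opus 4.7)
The plan is to prove the identity by identifying both sides as the coefficient of $\lambda^{N-k}$ in the characteristic polynomial $f(\lambda)=\det(\lambda I-A)$, up to the sign $(-1)^k$. This is a standard identity, and the whole argument is bookkeeping of coefficients.

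First I would use the factorization $f(\lambda)=\prod_{j=1}^{N}(\lambda-\lambda_j)$. Expanding this product and collecting terms by the power of $\lambda$ gives
\begin{align*}
f(\lambda)=\sum_{k=0}^{N}(-1)^{k}\Biggl(\sum_{1\le j_{1}<\cdots<j_{k}\le N}\prod_{\ell=1}^{k}\lambda_{j_{\ell}}\Biggr)\lambda^{N-k},
\end{align*}
so that the left-hand side of \eqref{A2} is, up to the sign $(-1)^{k}$, the coefficient of $\lambda^{N-k}$ in $f(\lambda)$.

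Next I would compute the same coefficient from the determinantal definition. Writing the $i$-th row of $\lambda I-A$ as $\lambda\mathbf{e}_{i}^{\top}-\mathbf{a}_{i}^{\top}$ (where $\mathbf{e}_{i}$ is the $i$-th standard basis vector and $\mathbf{a}_{i}^{\top}$ is the $i$-th row of $A$) and invoking multilinearity of the determinant in each row, I would expand $\det(\lambda I-A)$ as a sum over subsets $S\subseteq\{1,\dots,N\}$: the rows indexed by $S$ contribute the factor $\lambda\mathbf{e}_{i}^{\top}$, the remaining rows contribute $-\mathbf{a}_{i}^{\top}$. Thus
\begin{align*}
\det(\lambda I-A)=\sum_{S\subseteq\{1,\dots,N\}}\lambda^{|S|}(-1)^{N-|S|}\det\bigl(M_{S}\bigr),
\end{align*}
where $M_{S}$ is the matrix whose $i$-th row is $\mathbf{e}_{i}^{\top}$ if $i\in S$ and $\mathbf{a}_{i}^{\top}$ if $i\notin S$. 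Cofactor expansion along each row indexed by $S$ reduces $\det(M_{S})$ to the principal minor of $A$ indexed by the complement $S^{c}$, namely $\underset{1\le\ell,m\le|S^{c}|}{\det}(A_{j_{\ell}j_{m}})$ with $\{j_{1}<\cdots<j_{|S^{c}|}\}=S^{c}$. Substituting $|S|=N-k$ (so $|S^{c}|=k$) identifies the coefficient of $\lambda^{N-k}$ as $(-1)^{k}\sum_{1\le j_{1}<\cdots<j_{k}\le N}\underset{1\le\ell,m\le k}{\det}(A_{j_{\ell}j_{m}})$.

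Comparing coefficients of $\lambda^{N-k}$ in the two expansions and cancelling $(-1)^{k}$ yields \eqref{A2}. The only subtle point is the row reduction in the second step: one must check carefully that when the $i$-th row is $\mathbf{e}_{i}^{\top}$ (a single $1$ in column $i$), repeated Laplace expansion along these rows collapses the determinant to the principal minor on $S^{c}$ without introducing extra signs, because each such expansion contributes $(-1)^{i+i}=1$ after the appropriate row/column permutations. This sign check is the main routine hazard, but it is a direct consequence of Laplace expansion applied to rows of the identity.
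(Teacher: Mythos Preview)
Your proposal is correct and follows essentially the same approach as the paper: both proofs identify each side of \eqref{A2} with $(-1)^k$ times the coefficient of $\lambda^{N-k}$ in the characteristic polynomial $f(\lambda)=\det(\lambda I-A)$, using the factorization $\prod_j(\lambda-\lambda_j)$ for the left-hand side and a multilinear/Fredholm-type expansion of the determinant for the right-hand side, then compare coefficients. The paper simply cites the ``Fredholm determinant expansion'' for the second step where you spell out the row-by-row multilinearity and Laplace reduction explicitly.
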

\begin{proof}
Applying  binomial expansion to the characteristic polynomial $f(\lambda)=\det(\lambda I_N-A)$, we have
\begin{align*}
f(\lambda)=\lambda^N+\sum_{k=1}^N(-1)^k\lambda^{N-k}\sum_{1\le j_1<\cdots<j_k\le N}\ \prod_{\ell=1}^k\lambda_{j_\ell}.
\end{align*}
We also apply Fredholm determinant expansion to $f(\lambda)$ and obtain 
\begin{align*}
f(\lambda)=\lambda^N+\sum_{k=1}^{N}(-1)^k\lambda^{N-k}\sum_{1\le j_1<\cdots<j_k\le N}\underset{1\le \ell,m\le k}{{\rm det}}(A_{j_\ell j_m}).
\end{align*}
Therefore, the claim holds by comparing the coefficient of $\lambda^{N-k}$ each other. 
\end{proof}

\begin{lemma}[Twice cofactor expansion form]\label{lemA3}\ \\ 
For $N \times N$ matrix $A$ and the fixed integers $k<\ell$, 
\begin{align}\label{A3}
\det A&=a_{kk}\det(A_{k|k})-a_{k\ell}a_{\ell k}\det(A_{k\ell |\ell k})+\sum_{\substack{q\neq k,\ell\\q<\ell}}(-1)^{k+q-1}a_{k\ell}a_{\ell q}\det(A_{k\ell | \ell q})\nonumber\\
&+\sum_{\substack{q\neq k,\ell\\q>\ell}}(-1)^{k+q}a_{k\ell}a_{\ell q}\det(A_{k\ell | \ell q})+\sum_{\substack{p\neq k,\ell\\p>k}}(-1)^{\ell+p-1}a_{kp}a_{\ell k}\det(A_{k\ell |pk})\nonumber\\
&+\sum_{\substack{p\neq k,\ell\\p<k}}(-1)^{\ell+p}a_{kp}a_{\ell k}\det(A_{k\ell |pk})+\sum_{\substack{p\neq k,\ell,\ q \neq k \\p>q}}(-1)^{k+\ell+p+q-1}a_{kp}a_{\ell q}\det(A_{k\ell |pq})\nonumber\\
&+\sum_{\substack{p\neq k,\ell,\ q \neq k \\p<q}}(-1)^{k+\ell+p+q}a_{kp}a_{\ell q}\det(A_{k\ell |pq})
\end{align}
where $A_{k\ell |pq}$ is the $(N-2) \times (N-2)$ minor matrix that is obtained by removing the $k,\ell$-th rows and the $p,q$-th columns from A.
\end{lemma}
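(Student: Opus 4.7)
The plan is to apply Laplace (cofactor) expansion twice in succession: first along row $k$ of $A$, and then, for every resulting $(N-1) \times (N-1)$ minor (except the one attached to the diagonal entry $a_{kk}$), along the row corresponding to row $\ell$ of $A$. Collecting the resulting terms and splitting the sums by the special values of the column indices produces exactly the right-hand side of \eqref{A3}.

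First I would write the single-row expansion
\[
\det A = a_{kk}\det(A_{k|k}) + \sum_{p \neq k}(-1)^{k+p}\,a_{kp}\det(A_{k|p}),
\]
which isolates the first term of the claimed identity and leaves $N-1$ further minors to be expanded. For each $p \neq k$, since $k<\ell$, the row of $A_{k|p}$ corresponding to row $\ell$ of $A$ is at position $\ell-1$. Expanding $\det(A_{k|p})$ along this row, an entry $a_{\ell q}$ (with $q \neq p$) sits in column $q$ of $A_{k|p}$ when $q<p$ and in column $q-1$ when $q>p$; removing this row and column from $A_{k|p}$ yields $A_{k\ell|pq}$, and the cofactor sign is $(-1)^{(\ell-1)+q}$ when $q<p$ and $(-1)^{\ell+q}$ when $q>p$.

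Multiplying by the outer sign $(-1)^{k+p}$ yields a coefficient $(-1)^{k+\ell+p+q-1}$ when $q<p$ and $(-1)^{k+\ell+p+q}$ when $q>p$. The statement \eqref{A3} is then obtained by carving the double sum $\sum_{p\neq k}\sum_{q\neq p}$ according to the special values of $(p,q)$: the sub-case $p=\ell,\ q=k$ produces the $-a_{k\ell}a_{\ell k}\det(A_{k\ell|\ell k})$ term (using $(-1)^{2k+2\ell-1}=-1$); the sub-case $p=\ell,\ q\neq k$ produces the third and fourth sums (split by $q<\ell$ versus $q>\ell$); the sub-case $q=k,\ p\neq \ell$ produces the fifth and sixth sums (split by $p>k$ versus $p<k$); and the generic case $p\neq k,\ell$ with $q\neq k$ supplies the last two sums.

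The calculation is essentially bookkeeping, and the only place where real care is required is the index shift: row $\ell$ of $A$ sitting at position $\ell-1$ of $A_{k|p}$, and columns with index exceeding $p$ being shifted down by one in the minor. Once these two shifts are tracked correctly, the eight terms on the right-hand side arise by nothing more than itemizing the admissible values of $(p,q)$ and attaching the appropriate signs.
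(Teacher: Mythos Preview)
Your proof is correct and follows the same double cofactor expansion as the paper, which merely states that one expands $\det A$ along row $k$ and then expands each resulting $(N-1)\times(N-1)$ minor once more; your write-up supplies the sign bookkeeping that the paper omits. (The paper's one-line proof says ``the $\ell$-th column'' for the second expansion, but the presence of the factors $a_{\ell q}$ in \eqref{A3} shows that row $\ell$ is intended, exactly as you do.)
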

\begin{proof}
We expand $\det A$ by the $k$-th row, and we also expand each of the $(N-1)$-th determinants by the $\ell$-th column.
\end{proof}

\begin{lemma}[Cauchy-Binet formula, \cite{HJ}]\label{lemA4}\ \\
Let $A \in M_{m,n}(\C)$. For index sets $\alpha=\{i_1,\cdots,i_p\} \subseteq \{1,\cdots,m\},\ p \le m$\ and $\beta=\{j_1,\cdots, j_q\} \subseteq \{1,\cdots, n\},\ q \le n$,\ the $p \times q$ submatrix $A(\alpha,\beta)$ is defined as $A(\alpha,\beta)_{r,s}:=A_{i_r,j_s}$. Here, $i_1<\cdots<i_p$ and $j_1<\cdots<j_q$ hold and these index sets are ordered lexicographically. When $\sharp\alpha = \sharp\beta = k \le {\rm min}\{m,n\}$, the $k$-th compound matrix of $A$ is defined as the $\binom{m}{k} \times \binom{n}{k}$ matrix whose $(\alpha, \beta)$ entry is $\det(A(\alpha,\beta))$, and we denote this by $C_k(A)$.\\
Let $A \in M_{m,k}(\C)$,\ $B \in M_{k,n}(\C)$ and $C:=AB$. We fix the index sets $\alpha \subseteq \{1,\cdots,m\}$ and $\beta \subseteq \{1,\cdots, n\}$, where $\sharp\alpha = \sharp\beta =r \le {\rm min}\{m,k,n\}$. Then the determinant of the submatrix $C(\alpha, \beta)$ has an expression:
\begin{align}
\det(C(\alpha, \beta))=\sum_{\gamma}\det(A(\alpha,\gamma))\det(B(\gamma,\beta))
\end{align}
where the summation is taken over all index sets $\gamma \subseteq \{1,\cdots,k\}$ of cardinality $r$.
\end{lemma}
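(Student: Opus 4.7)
The plan is to prove the Cauchy--Binet formula by expanding $\det(C(\alpha,\beta))$ directly via the Leibniz formula and then unraveling the matrix product $C=AB$ to isolate minors of $A$ and $B$. Throughout I write $\alpha=\{i_1<\cdots<i_r\}$ and $\beta=\{j_1<\cdots<j_r\}$.

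First I would start from
\[
\det(C(\alpha,\beta)) = \sum_{\sigma\in S_r}\mathrm{sgn}(\sigma)\prod_{s=1}^r C_{i_s,j_{\sigma(s)}}
\]
and substitute $C_{i_s,j_{\sigma(s)}}=\sum_{\ell=1}^k A_{i_s,\ell} B_{\ell,j_{\sigma(s)}}$. Expanding the product and exchanging the order of summation produces a sum indexed by all maps $(\ell_1,\ldots,\ell_r)\in\{1,\ldots,k\}^r$:
\[
\det(C(\alpha,\beta)) = \sum_{\ell_1,\ldots,\ell_r=1}^k \Big(\prod_{s=1}^r A_{i_s,\ell_s}\Big) \sum_{\sigma\in S_r}\mathrm{sgn}(\sigma)\prod_{s=1}^r B_{\ell_s,j_{\sigma(s)}}.
\]
The inner sum is the determinant of the matrix obtained by extracting rows $\ell_1,\ldots,\ell_r$ of $B$ (in that very order) and columns $\beta$.

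Next I would invoke the alternating property of the determinant: whenever two of the $\ell_s$ coincide, the inner determinant vanishes, so only tuples with pairwise distinct entries contribute. Each such tuple corresponds uniquely to a pair $(\gamma,\pi)$, where $\gamma=\{\gamma_1<\cdots<\gamma_r\}\subseteq\{1,\ldots,k\}$ is the underlying set and $\pi\in S_r$ is the permutation with $\ell_s=\gamma_{\pi(s)}$. Under this reparametrisation the inner determinant equals $\mathrm{sgn}(\pi)\det(B(\gamma,\beta))$, and summing $\mathrm{sgn}(\pi)\prod_s A_{i_s,\gamma_{\pi(s)}}$ over $\pi\in S_r$ produces precisely $\det(A(\alpha,\gamma))$. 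Collecting terms then gives $\sum_\gamma \det(A(\alpha,\gamma))\det(B(\gamma,\beta))$, as desired.

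The main obstacle is purely bookkeeping: tracking signs correctly during the reindexing from unordered tuples $(\ell_1,\ldots,\ell_r)$ to a sorted set $\gamma$ equipped with a permutation $\pi$. The essential check is that $\det B[\ell_1,\ldots,\ell_r\,|\,\beta]=\mathrm{sgn}(\pi)\det(B(\gamma,\beta))$, which follows from the row-swap antisymmetry of the determinant. Apart from this sign accounting, the argument is a routine combinatorial manipulation, and no analytic ideas are required.
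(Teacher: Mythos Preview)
Your argument is the standard Leibniz-expansion proof of Cauchy--Binet and is correct; the sign bookkeeping via the bijection $(\ell_1,\ldots,\ell_r)\leftrightarrow(\gamma,\pi)$ is handled properly. Note that the paper does not actually prove this lemma at all but simply records it with a citation to Horn--Johnson, so there is no ``paper's own proof'' to compare against; your write-up would supply what the paper omits.
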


\begin{lemma}[Ito's formula for complex cases]\label{lemA5}\ \\
Suppose that $Z_t=(Z^{(1)}_t,\cdots,Z^{(n)}_t)$ is a continuous complex semi-martingale vector and $f: \C^n \to C$ is a $C^2$ function. Then
\begin{align}\label{A7}
df(Z_t)&=\sum_{i=1}^n\Bigl(\partial_{z_i}f(Z_t)dZ^{(i)}_t+\overline{\partial_{z_i}}f(Z_t)d\overline{Z}^{(i)}_t\Bigr) \nonumber\\
&+\frac{1}{2}\sum_{i=1}^n\Bigl(\partial_{z_i}\partial_{z_i}f(Z_t)d\langle Z^{(i)},Z^{(i)}\rangle_t+2\partial_{z_i}\overline{\partial_{z_i}}f(Z_t)d\langle Z^{(i)},\overline{Z}^{(i)}\rangle_t+\overline{\partial_{z_i}}\ \overline{\partial_{z_i}}f(Z_t)d\langle \overline{Z}^{(i)},\overline{Z}^{(i)}\rangle_t\Bigr)\nonumber\\
&+\sum_{i<j}\Bigl(\partial_{z_i}\partial_{z_j}f(Z_t)d\langle Z^{(i)},Z^{(j)}\rangle_t+\overline{\partial_{z_i}}\partial_{z_j}f(Z_t)d\langle \overline{Z}^{(i)},Z^{(j)}\rangle_t+\partial_{z_i}\overline{\partial_{z_j}}f(Z_t)d\langle Z^{(i)},\overline{Z}^{(j)}\rangle_t\nonumber\\
&\ \ \ \ \ \ \ \ \ \ +\overline{\partial_{z_i}}\ \overline{\partial_{z_j}}f(Z_t)d\langle \overline{Z}^{(i)},\overline{Z}^{(j)}\rangle_t\Bigr)
\end{align}
where the complex quadratic variation $d\langle \cdot,\cdot\rangle_t$ is defined by \eqref{cqv}.
\end{lemma}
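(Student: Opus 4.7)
The plan is to deduce the complex Itô formula from the classical real Itô formula by decomposing every complex semi-martingale into its real and imaginary parts and translating between the Wirtinger derivatives and the standard real partial derivatives. Concretely, write $Z^{(i)}_t = X^{(i)}_t + \sqrt{-1}\, Y^{(i)}_t$ with $X^{(i)}, Y^{(i)}$ continuous real semi-martingales on $(\Omega,\F,\{\F_t\}_{t\ge 0},\P)$, and write $f(z_1,\ldots,z_n) = u(x_1,y_1,\ldots,x_n,y_n) + \sqrt{-1}\, v(x_1,y_1,\ldots,x_n,y_n)$, where $u,v\colon \R^{2n}\to \R$ are $C^2$. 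The real multidimensional Itô formula, applied separately to $u(X_t,Y_t)$ and $v(X_t,Y_t)$ and combined via $df(Z_t) = du(X_t,Y_t) + \sqrt{-1}\, dv(X_t,Y_t)$, gives an expression in terms of $\partial_{x_i}$, $\partial_{y_i}$, their second derivatives, and the four real covariations $d\langle X^{(i)},X^{(j)}\rangle_t$, $d\langle X^{(i)},Y^{(j)}\rangle_t$, $d\langle Y^{(i)},X^{(j)}\rangle_t$, $d\langle Y^{(i)},Y^{(j)}\rangle_t$. The task is then to rewrite this expression using the complex ingredients of \eqref{A7}.

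For the first-order part, I would use the Wirtinger relations $\partial_{x_i} = \partial_{z_i} + \overline{\partial_{z_i}}$ and $\partial_{y_i} = \sqrt{-1}(\partial_{z_i} - \overline{\partial_{z_i}})$ together with $dZ^{(i)} = dX^{(i)} + \sqrt{-1}\, dY^{(i)}$ and $d\overline{Z}^{(i)} = dX^{(i)} - \sqrt{-1}\, dY^{(i)}$. A short direct computation gives $\partial_{x_i} f\, dX^{(i)} + \partial_{y_i} f\, dY^{(i)} = \partial_{z_i} f\, dZ^{(i)} + \overline{\partial_{z_i}} f\, d\overline{Z}^{(i)}$, which is the first line of \eqref{A7}.

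The bulk of the argument is the second-order part. I would expand each complex quadratic variation appearing in \eqref{A7} via the defining formula \eqref{cqv}; for instance, $d\langle Z^{(i)},\overline{Z}^{(j)}\rangle_t = d\langle X^{(i)},X^{(j)}\rangle_t + d\langle Y^{(i)},Y^{(j)}\rangle_t + \sqrt{-1}\bigl(d\langle Y^{(i)},X^{(j)}\rangle_t - d\langle X^{(i)},Y^{(j)}\rangle_t\bigr)$, and similarly for the three other combinations. Simultaneously, expand the second-order Wirtinger derivatives $\partial_{z_i}\partial_{z_j} f$, $\partial_{z_i}\overline{\partial_{z_j}} f$, etc., back into real second derivatives of $u$ and $v$. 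Then verify, index pair by index pair, that the $4\times 4$ bilinear form built from (real second derivative) $\times$ (real covariation) obtained from the real Itô formula coincides with the one built from (Wirtinger second derivative) $\times$ (complex covariation) on the right-hand side of \eqref{A7}. Equivalently, one checks that the $2\times 2$ change of basis $(dX,dY)\mapsto(dZ,d\overline{Z})$ on increments is the transpose of the dual change $(\partial_{x},\partial_{y})\mapsto(\partial_{z},\overline{\partial_{z}})$ on derivatives, so the bilinear forms transform consistently. Finally, split the index sum into diagonal ($i=j$) and off-diagonal ($i\neq j$) parts, combining symmetric $i,j$ contributions into the $\sum_{i<j}$ form, which also accounts for the factor $\frac{1}{2}$ that survives only on the diagonal and for the doubling of the mixed term $2\,\partial_{z_i}\overline{\partial_{z_i}} f\, d\langle Z^{(i)},\overline{Z}^{(i)}\rangle_t$.

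The main obstacle is not conceptual but combinatorial: carefully tracking the sixteen real terms arising from the four real covariations paired with the four real second derivatives, and matching them against the sixteen complex terms on the right of \eqref{A7}. A small subtlety I would highlight is that the hypothesis is $C^2$ regularity of $f$ as a function of the real variables $(x_i,y_i)$ rather than holomorphicity, so that the Wirtinger operators $\partial_{z_i}$ and $\overline{\partial_{z_i}}$ are genuinely independent differential operators on $f$ and $\overline{f}$ and both appear on equal footing in \eqref{A7}.
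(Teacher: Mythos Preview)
Your proposal is correct and follows exactly the same approach as the paper: apply the real multidimensional Itô formula to $f$ viewed as a map $\R^{2n}\to\R^2$, and then reassemble the real and imaginary parts algebraically using the Wirtinger calculus and the definition \eqref{cqv} of the complex quadratic variation. The paper's own proof is in fact a one-line summary of precisely this procedure, so your write-up is a faithful (and more detailed) expansion of it.
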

\begin{proof}
We apply standard Ito's formula for $f : \R^{2n} \to \R^2$ and put the real and imaginary part together algebraically.
\end{proof}

                 
\section*{Acknowledgments}
The author would like to thank Prof. Hideki Tanemura for his valuable discussions and encouragement. He also would like to thank Prof. Makoto Katori and Dr. Mikio Shibuya for their useful comments.


\end{document}